\theoremstyle{plain}
\newtheorem{X}{X}[section]
\newtheorem{lemma}[X]{Lemma}
\newtheorem{theorem}[X]{Theorem}
\newtheorem{proposition}[X]{Proposition}
\theoremstyle{definition}
\numberwithin{equation}{section}
\renewcommand{\le}{\ensuremath{\leqslant}}
\renewcommand{\ge}{\ensuremath{\geqslant}}
\renewcommand{\Re}[1]{\ensuremath{\mathrm{Re}(#1)}}
\newcommand{\br}[1]{\ensuremath{\left(#1\right)}} 
\newcommand{\Br}[1]{\ensuremath{\left\{#1\right\}}}   
\newcommand{\ab}[1]{\ensuremath{\vert#1\vert}} 
\newcommand{\abs}[1]{\ensuremath{\left\lvert#1\right\rvert}} 
\newcommand{\sums}[2]{\ensuremath{\sum_{\substack{#1\\#2}}}}
\newcommand{\sumss}[3]{\ensuremath{\sum_{\substack{#1\\#2\\ #3}}}}
\newcommand{\BigO}[1]{\ensuremath{O\br{#1}}} 
\newcommand{\Oh}{\ensuremath{O}} 
\newcommand{\defeq}{\ensuremath{\colonequals}} 
\newcommand{\eqdef}{\ensuremath{\equalscolon}} 
\newcommand{\li}[1]{\ensuremath{\mathrm{Li}\br{#1}}} 
\newcommand{\dd}[1]{\ensuremath{\,\mathrm{d}#1}}  
\newcommand{\Ep}{\ensuremath{\tilde{E}}} 
\newcommand{\expep}{\ensuremath{e_{p}}}  
\newcommand{\Gal}[1]{\ensuremath{\mathrm{Gal}\br{#1/\Q}}}  
\renewcommand{\L}[1]{\ensuremath{L_{#1}}} 
\newcommand{\disc}[1]{\ensuremath{\Delta_{#1}}}  
\renewcommand{\deg}[1]{\ensuremath{n_{#1}}} 
\newcommand{\rad}{\ensuremath{\mathrm{rad}}}
\newcommand{\Z}{\ensuremath{\mathbb{Z}}} 
\newcommand{\Q}{\ensuremath{\mathbb{Q}}} 
\newcommand{\QQ}{\ensuremath{\overline{\mathbb{Q}}}} 
\newcommand{\R}{\ensuremath{\mathbb{R}}} 
\newcommand{\fp}{\ensuremath{\mathbb{F}_p}} 
\newcommand{\OO}{\ensuremath{\mathfrak{O}}} 
\title{On the average exponent of elliptic curves modulo $p$}
\author{Tristan Freiberg}
\address{Department of Mathematics, 
         KTH Royal Institute of Technology, 
         SE-100 44 Stockholm, Sweden}
\email{tristanf@kth.se}
\author{P\"ar Kurlberg}
\address{Department of Mathematics, 
         KTH Royal Institute of Technology, 
         SE-100 44 Stockholm, Sweden}
\email{kurlberg@kth.se}
\thanks{The first author is supported by the G\"oran Gustafsson 
        Foundation (KVA). The second author is supported by grants 
        from the G\"oran Gustafsson Foundation (KVA) and the 
        Swedish Research Council.}
\date{\today}
\begin{document}

\begin{abstract}
  Given an elliptic curve $E$ defined over $\Q$ and a prime $p$ of
  good reduction, let $\Ep(\fp)$ denote the group of $\fp$-points 
  of the reduction of $E$ modulo $p$, and let $\expep$ denote the 
  exponent of this group.  Assuming a certain form of the 
  Generalized Riemann Hypothesis (GRH), we study the average of 
  $\expep$ as $p \le X$ ranges over primes of good reduction, and 
  find that the average exponent essentially equals 
  $p\cdot c_{E}$, where the constant $c_{E} > 0$ depends on $E$.  
  For $E$ without complex multiplication (CM), $c_{E}$ can be 
  written as a rational number (depending on $E$) times a 
  universal constant, 
  $c \defeq \prod_{q}\br{1 - \frac{q^3}{(q^2-1)(q^5-1)}}$, 
  the product being over all primes $q$. 
  Without assuming GRH, we can determine the 
  average exponent when $E$ has CM, as well as give an upper bound 
  on the average in the non-CM case.
\end{abstract}

\maketitle

\section{Introduction}\label{Section 1}

Given an elliptic curve $E$ defined over $\Q$, and a prime $p$ for
which $E$ has good reduction, let $\Ep(\fp)$ denote the group of
$\fp$-points of the reduction of $E$ modulo $p$. The behavior of 
$\Ep(\fp)$ as $p$ varies over the primes has received considerable 
attention --- the oscillations of the cardinalities $|\Ep(\fp)|$ 
is a central question in modern number theory, and the structure 
of $\Ep(\fp)$ as a group, for example, the existence of large 
cyclic subgroups, especially of prime order, is of interest 
because of applications to elliptic curve cryptography 
\cite{K1987, M1986}.

If $p$ is a prime of good reduction then 
$\Ep(\fp) \cong \Z/d_p\Z \times \Z/e_p\Z$ for uniquely determined
integers $d_p, e_p$, with $d_p \mid e_p$. The size of the maximal 
cyclic subgroup, that is the exponent, of $\Ep(\fp)$ is therefore 
$e_p$. For primes $p$ of bad reduction we set $e_p = 0$. The 
purpose of this paper, motivated by a question of Joseph Silverman 
(personal communication), is to investigate the average of 
$\expep$ as $p$ varies. Conditional on a certain form of the 
Generalized Riemann Hypothesis (GRH), we will show that there 
exists $c_{E} \in (0,1)$ such that
\begin{align*}
\sum_{p\le X} \expep \sim c_{E} \cdot \li{X^2}
\quad \text{as} \quad X\to\infty,
\end{align*}
where $\li{X^2} \defeq \int_2^{X^2} \dd{t}/(\log t)$ is the 
logarithmic integral of $X^2$. Since 
$\sum_{p \le X} p \sim \li{X^2}$ (by partial summation and the 
prime number theorem), we may interpret this as the 
average value of $\expep$ being $p\cdot c_{E}$.

Before stating our main theorem we explain what we mean by GRH. 
Given a positive integer $k$, let $\L{k}$ denote the $k$-division 
field of $E$, that is, the number field obtained by adjoining to 
$\Q$ the coordinates of all points in $E[k]$, the subgroup of 
$k$-torsion of points of $E$. Let $\zeta_{\L{k}}(s)$ denote the 
Dedekind zeta function associated with $\L{k}$. We say that 
$\zeta_{L_k}(s)$ satisfies the Riemann Hypothesis (RH) if all 
zeros with positive real part lie on the line $\Re{s}=1/2$. By GRH 
we will here, and in what follows, mean that the Riemann 
Hypothesis holds for $\zeta_{\L{k}}$ for all positive integers 
$k$.

\begin{theorem}\label{Theorem 1.1}
  Given an elliptic curve $E$ defined over $\Q$, there exists a 
  number $c_{E} \in (0,1)$ such that on GRH we have
\begin{align*}
\sum_{p \le X} \expep 
 = c_E \cdot \li{X^2} 
   + \Oh_{E}\br{X^{19/10}(\log X)^{6/5}}
\end{align*}
for $X \ge 2$. The implied constant depends on $E$ at most. 
\end{theorem}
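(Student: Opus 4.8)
The plan is to express the sum $\sum_{p \le X} \expep = \sum_{p \le X} e_p$ in terms of quantities governed by division fields, and then to apply effective Chebotarev under GRH. Writing $|\Ep(\fp)| = d_p e_p$ with $d_p \mid e_p$, the exponent is $e_p = |\Ep(\fp)|/d_p$, so the heart of the matter is to understand the ``deficiency'' $d_p$, the order of the non-cyclic part of the group. The key observation is that $d_p \ge m$ precisely when $E[m] \subseteq \Ep(\fp)$, which (for $p \nmid m$ of good reduction) happens if and only if $p$ splits completely in the $m$-division field $\L{m}$, i.e.\ the Frobenius at $p$ is trivial in $\Gal{\L{m}}$. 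Thus I would first write
\begin{align*}
e_p = \frac{|\Ep(\fp)|}{d_p},
\qquad
d_p = \sum_{\substack{m \ge 1 \\ E[m] \subseteq \Ep(\fp)}} \varphi(m)
    = \sum_{m \ge 1}\varphi(m)\, \mathbf{1}[p \text{ splits completely in } \L{m}],
\end{align*}
using $\sum_{m \mid d_p}\varphi(m) = d_p$, and then replace $|\Ep(\fp)| = p + 1 - a_p$ via Hasse, where $a_p = O(\sqrt{p})$.

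Next I would separate the main term from the error. Since $|\Ep(\fp)| = p + 1 - a_p$, the contribution of the $p$ itself produces the expected size $\sim \li{X^2}$ after dividing by $d_p$; the terms $1 - a_p$ contribute a lower-order amount controlled by $|a_p| \le 2\sqrt{p}$. The constant $c_E$ emerges as the average value of $1/d_p$ over primes, and its value is dictated by the densities of primes splitting completely in each $\L{m}$. For the non-CM case these densities are $1/[\L{m}:\Q]$, and by Serre's open image theorem $[\L{m}:\Q]$ agrees with $|\mathrm{GL}_2(\Z/m\Z)|$ up to a bounded index for all $m$, which after an Euler product computation yields $c_E$ as a rational multiple of the universal constant $c$ displayed in the abstract; I would verify the local factors match $1 - q^3/((q^2-1)(q^5-1))$ by summing $\varphi(q^k)/|\mathrm{GL}_2(\Z/q^k\Z)|$ over $k$.

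I would then convert splitting densities into counts by invoking the GRH-conditional effective Chebotarev density theorem of Lagarias--Odlyzko (in the Serre-refined form) applied to each $\L{m}$: the number of $p \le X$ splitting completely in $\L{m}$ equals $\li{X}/[\L{m}:\Q] + O\bigl(\sqrt{X}\log(\disc{\L{m}} X^{[\L{m}:\Q]})\bigr)$, where the conductor-discriminant formula bounds $\log \disc{\L{m}}$ polynomially in $m$. Summing $\varphi(m)$ times this over $m$, and truncating the $m$-sum at a threshold $M = M(X)$ chosen to balance the error terms against the tail $\sum_{m > M}\varphi(m)\cdot(\text{density})$, produces the stated main term with a power-saving error. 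The main obstacle, and where the exponent $19/10$ is forced, is precisely this balancing: the Chebotarev error grows with $m$ (through $\disc{\L{m}}$ and the degree $[\L{m}:\Q] \asymp m^4$), while the truncation tail decays only polynomially, so the optimal cutoff $M$ must be chosen carefully and the resulting error accumulated over all $m \le M$; controlling this accumulation, together with the $a_p$-contribution via partial summation against the prime counting function, is the technical crux. The non-CM versus CM distinction affects only the index bounds on $[\L{m}:\Q]$ and hence the value of $c_E$, not the structure of the argument.
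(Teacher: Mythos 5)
Your overall architecture (reduce to $\sum_p p/d_p$ via Hasse, detect divisibility $m \mid d_p$ by complete splitting in $\L{m}$, apply GRH-effective Chebotarev, truncate and balance) matches the paper's, but the central identity you build on is the wrong one, and the plan as written computes the wrong quantity. You expand the \emph{denominator}, $d_p = \sum_{m \mid d_p}\phi(m)$ (note also that the detection condition is $m \mid d_p$, not $d_p \ge m$), and then propose to ``sum $\phi(m)$ times'' the Chebotarev count over $m$. But that double sum equals
\begin{align*}
\sum_{m}\phi(m)\sums{p \le X}{m \mid d_p} 1
  \;=\; \sum_{p \le X}\sum_{m \mid d_p}\phi(m)
  \;=\; \sum_{p \le X} d_p ,
\end{align*}
the average of $d_p$ --- not the average of $1/d_p$, which is what $\expep = |\Ep(\fp)|/d_p$ requires; there is no linear way to recover $\sum_p p/d_p$ from $\sum_p p\,d_p$. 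What is needed is an expansion of $1/d_p$ itself as a linear combination of divisibility indicators, and this is exactly what the paper uses (following Kurlberg--Pomerance): $\frac{1}{k} = \sum_{hj \mid k}\frac{\mu(h)}{j}$, equivalently $\frac{1}{d_p} = \sum_{k \mid d_p}\frac{(-1)^{\omega(k)}\phi(\rad(k))}{k}$, so that the coefficients entering $c_E$ in \eqref{(1.1)} are $\frac{(-1)^{\omega(k)}\phi(\rad(k))}{k}$, not $\phi(k)$. Your own sanity check exposes the mismatch: $\sum_{k\ge1}\phi(q^k)/|\mathrm{GL}_2(\Z/q^k\Z)| = \frac{q^2}{(q^2-1)(q^3-1)}$, which does not reproduce the local factor $1-\frac{q^3}{(q^2-1)(q^5-1)}$ of the constant $c$; with the correct coefficients one gets $\sum_{k\ge1}\frac{(q-1)/q^k}{|\mathrm{GL}_2(\Z/q^k\Z)|} = \frac{q^3}{(q^2-1)(q^5-1)}$, as required.

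There is a second, independent gap in your treatment of large $m$. Under GRH the error per field in the $p$-weighted count $\sums{p\le X}{m \mid d_p} p$ is of size $X^{3/2}\log (XN)$, and $m$ ranges up to $2\sqrt{X}$, so applying Chebotarev to every $m$ accumulates an error of order $X^{2}\log X$, swamping the main term $\li{X^2} \asymp X^2/\log X$. You therefore must truncate, but the discarded tail cannot be bounded by ``$\sum_{m>M}\phi(m)\cdot(\text{density})$'': replacing counts by densities in the tail is precisely what you cannot afford to do, so the balancing you describe has nothing valid on one side. The paper supplies the missing unconditional ingredient (Lemma \ref{Lemma 3.6}(a)): if $k \mid d_p$ then $k^2 \mid p+1-a_p$ and also $k \mid p-1$ (since $\Q(e^{2\pi i/k}) \subseteq \L{k}$), hence $k \mid a_p - 2$; counting primes satisfying these simultaneous congruences (with $|a_p| \le 2\sqrt{p}$ by Hasse) gives $|\{p \le X : d_p > Y\}| \ll X^{3/2}/Y^2 + X^{1/2}\log X$. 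It is this sieve-type bound, played off against the Chebotarev errors for $k \le Y$, that permits the choice $Y = X^{1/5}(\log X)^{-2/5}$ and yields the exponent $19/10$; without it the argument does not close.
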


Settling for a weaker error term, we can remove the GRH assumption 
for CM-curves.

\begin{theorem}\label{Theorem 1.2}
Let $E$ be an elliptic curve defined over $\Q$ with complex 
multiplication, and let $c_E$ be as in Theorem \ref{Theorem 1.1}. 
For $X \ge 3$, we have
\begin{align*}
 \sum_{p \le X} \expep 
   = c_E \cdot \li{X^2} \cdot  
      \Br{1 + \Oh_E\br{\frac{\log\log X}{(\log X)^{1/8}}}}.
\end{align*}
The implied constant depends on $E$ at most. 
\end{theorem}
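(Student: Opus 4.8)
The plan is to reduce $\sum_{p\le X}\expep$ to a weighted sum, over integers $m$, of prime-counting quantities attached to the division fields $\L{m}$, and then to use the abelian structure of these fields over the CM field to carry out the counting unconditionally. Writing $\Ep(\fp)\cong\Z/d_p\Z\times\Z/e_p\Z$ with $d_p\mid e_p$, we have $\expep=\ab{\Ep(\fp)}/d_p$, and for $p$ of good reduction with $p\nmid m$ the divisibility $m\mid d_p$ holds precisely when $\Ep[m]\subseteq\Ep(\fp)$, i.e.\ precisely when $p$ splits completely in $\L{m}$. Since $1/n=\sum_{d\mid n}g(d)$ for the multiplicative function $g$ with $g(q^j)=-(q-1)/q^j$ (equivalently $g(m)=\mu(\rad(m))\phi(\rad(m))/m$), this yields, with $P_m(X)$ denoting the set of primes $p\le X$ of good reduction that split completely in $\L{m}$,
\begin{align*}
\sum_{p\le X}\expep=\sum_{m\ge1}g(m)\sum_{p\in P_m(X)}\ab{\Ep(\fp)}.
\end{align*}
Using Hasse's bound $\ab{\Ep(\fp)}=p+1-a_p=p+\BigO{\sqrt p}$ I would replace $\ab{\Ep(\fp)}$ by $p$; the resulting $\sqrt p$-weighted remainder is of smaller order after the estimates below.

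The crux is then to evaluate $\sum_{p\in P_m(X)}p$ and to sum the outcome against $g(m)/\deg{m}$, where $\deg{m}=[\L{m}:\Q]$. Here the CM hypothesis is decisive: if $E$ has CM by an order in the imaginary quadratic field $K$, then $\L{m}$ is abelian over $K$, so by the main theorem of complex multiplication the condition defining $P_m(X)$ becomes the condition that a degree-one prime of $K$ above $p$ lie in a prescribed class of a ray class group of $K$ of modulus dividing $(m)$ and order $\asymp\deg{m}\asymp m^2$. Counting such primes is a question about Hecke characters of $K$, for which the GRH-free machinery is available: a Landau-type zero-free region, the prime ideal theorem in ray classes, Siegel's theorem to neutralise a possible exceptional real zero, and the Brun--Titchmarsh inequality for an unconditional upper bound. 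For $m$ in a main range $m\le y$ I would combine the prime ideal theorem in ray classes with partial summation to obtain
\begin{align*}
\sum_{p\in P_m(X)}p=\frac{1}{\deg{m}}\li{X^2}+\br{\text{error}},
\end{align*}
with the error controlled uniformly for $m\le y$ by the zero-free region, the exceptional modulus being absorbed into an ineffective $\Oh_E$-constant via Siegel's theorem. Summing $g(m)/\deg{m}$ over all $m$ recovers the constant $c_E=\sum_{m\ge1}g(m)/\deg{m}$ of Theorem \ref{Theorem 1.1}, the series converging because $\deg{m}\asymp m^2$ in the CM case.

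It remains to bound the tail $m>y$. Splitting completely in $\L{m}$ forces $m^2\mid\ab{\Ep(\fp)}$, so by Hasse no prime contributes once $m>\sqrt X+1$; and for $y<m\le X^{1/2-\delta}$ the Brun--Titchmarsh inequality in $K$ gives $\#P_m(X)\ll_\delta X/(m^2\log X)$, whence this part of the tail contributes
\begin{align*}
\ll\li{X^2}\sum_{m>y}\frac{\ab{g(m)}}{m^2}\ll\frac{\li{X^2}}{y},
\end{align*}
the residual range $X^{1/2-\delta}<m\le\sqrt X+1$ being negligible since it forces $\ab{\Ep(\fp)}$ into one of very few values. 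Balancing $\li{X^2}/y$ against the main-range error --- whose quality is limited by the strength of the unconditional zero-free region rather than by GRH --- and optimising $y$ as a small power of $\log X$ produces the stated error $\li{X^2}\log\log X/(\log X)^{1/8}$, the $\log\log X$ coming from the elementary estimates for $\sum_{m\le y}\ab{g(m)}$.

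The main obstacle is exactly the unconditional, uniform-in-$m$ count of primes splitting completely in $\L{m}$. Without GRH one cannot take $m$ nearly as large as $\sqrt X$ with a power-saving error, so the truncation level $y$ must be kept small, and the real technical care lies in (i) making the zero-free-region error uniform in the ray-class modulus and disposing of exceptional zeros through Siegel's theorem --- which renders the implied constant ineffective but harmless, as it depends only on $E$ --- and (ii) showing that primes $p$ for which $\ab{\Ep(\fp)}$ has a large square divisor, equivalently $d_p$ is large, are sufficiently rare that the tail is dominated by $\li{X^2}/y$. It is this tension between truncation and unconditional counting, rather than any single estimate, that forces the weaker error term of Theorem \ref{Theorem 1.2} as compared with Theorem \ref{Theorem 1.1}.
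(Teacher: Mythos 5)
Your proposal is sound in outline and would prove the theorem, but it is not the paper's own argument; it is in fact closer to the refinements the paper only cites (Wu, Kim) than to its Section 5. Two differences matter. First, the decomposition: the paper truncates the identity $\frac{1}{d_p}=\sum_{hj\mid d_p}\frac{\mu(h)}{j}$ at $hj\le Y$ and must then control correction terms ($\mathcal{E}_{12}$, $\mathcal{E}_{13}$) in which divisors of $d_p$ up to $Y^{3}$ occur; since its unconditional input, the effective Chebotarev theorem of Lemma \ref{Lemma 3.3}(a)(ii), requires $c_1k^{8}N^{2}\le\log X$, this forces $Y\asymp(\log X)^{1/24}$ and the final bookkeeping there gives relative error $\ll\log\log X/(\log X)^{1/24}$. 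You keep the identity exact, write $\sum_{p\le X}\expep=\sum_{m}g(m)\sum_{p\in P_m(X)}\ab{\Ep(\fp)}$, and split only the $m$-sum, so the prime-counting input is needed only for $m\le y$ and $y\asymp(\log X)^{1/8}$ is admissible; this is precisely Wu's simplification described in Section \ref{Section 8}, and it --- rather than the paper's own arrangement --- is what delivers the stated exponent $1/8$. Second, the main-range tool: where the paper quotes effective Chebotarev for the Galois extension $\L{k}/\Q$ (using only $\deg{\L{k}}\le k^{2}$), you pass to $K$, use that $\L{m}/K$ is abelian, and invoke Hecke $L$-functions, the prime ideal theorem in ray classes, and Siegel's theorem. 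That buys uniformity for moduli up to an arbitrary fixed power of $\log X$ (at the cost of ineffective constants), which is exactly the route by which Kim later obtained the error $\Oh_{E,A}((\log X)^{-A})$; so your ``balancing'' at $(\log X)^{1/8}$ actually undersells your own method. Your tail estimate --- Brun--Titchmarsh over $K$ giving $\#P_m(X)\ll X/(m^{2}\log X)$ --- is in substance the paper's Lemma \ref{Lemma 3.6}(b): primes $p=N(\pi)$ with $\pi\equiv1\bmod m\OO_K$, counted by Cojocaru's sieve bound. Note that the factor $\log\log X$ in the theorem comes from that sieve bound, not, as you assert, from $\sum_{m\le y}\ab{g(m)}$ (which is $\ll y$, since $\ab{g(m)}\le1$).

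Two points you still need to supply. (i) Both your ray-class reduction and the lattice count behind Brun--Titchmarsh require attaching to each $p\in P_m(X)$ a prime (or element) of $\OO_K$; this needs $K\subseteq\L{m}$, which holds for $m\ge3$, and it is also what excludes supersingular primes (for which $\pi_p\notin K$) from every $P_m(X)$ with $m\ge3$. Your write-up never mentions the ordinary/supersingular dichotomy; without this observation --- the one that opens the proof of Lemma \ref{Lemma 3.6}(b) --- the passage to $\OO_K$ is unjustified. (ii) $E$ may have CM by a non-maximal order, and the conductor of $\L{m}/K$ is $(m)$ times a fixed ideal depending on $E$, not a divisor of $(m)$; the ray-class (or ring-class) statement must be adjusted accordingly. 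Both repairs are standard and do not affect the shape of the argument.
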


({\em Note added in proof:} The error terms in Theorems 
\ref{Theorem 1.1} and \ref{Theorem 1.2} have recently been 
improved by Wu \cite{W2012} and Kim \cite{K2012}. See Section 
\ref{Section 8} for details.)

For non-CM curves we can give an unconditional upper bound of the
correct order of magnitude. In the following theorem, we use the
notation $F(X) \lesssim G(X)$, which means that 
$\limsup_{X \to \infty} F(X)/G(X) \le 1$.

\begin{theorem}\label{Theorem 1.3}
Let $E$ be an elliptic curve defined over $\Q$, and let $c_E$ as 
in Theorem \ref{Theorem 1.1}. As $X$ tends to infinity, we have
\begin{align*}
\sum_{p \le X} \expep 
  \lesssim c_E \cdot \li{X^2}.
\end{align*}
\end{theorem}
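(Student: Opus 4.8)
The plan is to start from the identity $\expep = \abs{\Ep(\fp)}/d_p$, where $d_p$ is the smaller invariant factor of $\Ep(\fp)$, and to detect the divisibility $d \mid d_p$ through the division fields: for $p$ of good reduction (unramified in $\L{d}$) one has $d \mid d_p$ if and only if $E[d] \subseteq \Ep(\fp)$, i.e. if and only if $p$ splits completely in $\L{d}$. First I would introduce the multiplicative function $g$ with $g(1) = 1$ and $g\br{q^{j}} = \br{1-q}/q^{j}$ for primes $q$ and $j \ge 1$; a short computation of Euler factors shows $\sum_{d \mid n} g(d) = 1/n$, so that
\[
\expep = \abs{\Ep(\fp)} \sum_{d \mid d_p} g(d) = \abs{\Ep(\fp)} \sums{d \ge 1}{E[d] \subseteq \Ep(\fp)} g(d).
\]
Interchanging summation and using Hasse's bound $\abs{\Ep(\fp)} = p + 1 - a_p$ with $\abs{a_p} \le 2\sqrt p$ (which forces $d^{2} \le \abs{\Ep(\fp)} \le \br{\sqrt p + 1}^{2}$ whenever $E[d] \subseteq \Ep(\fp)$), I would arrive at the exact identity
\[
\sum_{p \le X} \expep = \sum_{d \le \sqrt X + 1} g(d)\, Q_d(X), \qquad Q_d(X) \defeq \sums{p \le X}{E[d] \subseteq \Ep(\fp)} \abs{\Ep(\fp)}.
\]
The candidate constant is $c_E = \sum_{d \ge 1} g(d)/\deg{d}$; this converges absolutely, since $\abs{g(d)} \le \rad(d)/d \le 1$ while $\deg{d} = [\L{d} : \Q] \gg_E d^{2}$ (indeed $\gg_E d^{4}$ for non-CM $E$, by Serre's open image theorem), and a factor-by-factor computation in the case of maximal Galois image recovers the universal Euler product $c$ of the abstract.

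Next I would fix a slowly growing truncation $D = D(X) \to \infty$ and split the sum over $d$ at $D$. For $d \le D$ the condition $E[d] \subseteq \Ep(\fp)$ says $\mathrm{Frob}_p$ is trivial in $\Gal{\L{d}}$, a Chebotarev condition of density $1/\deg{d}$. Applying the unconditional effective Chebotarev density theorem of Lagarias--Odlyzko, followed by partial summation and $\sum_{p \le X} p \sim \li{X^2}$ (the term $1 - a_p$ in $\abs{\Ep(\fp)}$ contributing only a lower-order amount), gives $Q_d(X) = \li{X^2}/\deg{d} + (\text{error})$ uniformly for $d$ in this range, provided $D$ stays within the range of unconditional validity, so $D$ a suitable power of $\log X$. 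Summing against $g(d)$ and invoking $\sum_{d \le D} g(d)/\deg{d} \to c_E$ yields
\[
\sum_{d \le D} g(d)\, Q_d(X) = c_E \cdot \li{X^2}\br{1 + o(1)}.
\]

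The heart of the matter is the tail $\sum_{D < d \le \sqrt X + 1} g(d)\, Q_d(X)$, which must be shown to be $o\br{\li{X^2}}$; since $g$ is not of one sign, it cannot be folded into the main term and must be bounded in absolute value. Writing $N_d(X) = \#\{p \le X : E[d] \subseteq \Ep(\fp)\}$, one has $Q_d(X) \le \br{\sqrt X + 1}^{2} N_d(X) \ll X\, N_d(X)$, so everything reduces to a sufficiently uniform upper bound on $N_d(X)$. The difficulty is that for $d$ beyond a fixed power of $\log X$ the field $\L{d}$, of degree $\asymp d^{4}$ and enormous discriminant, lies outside the reach of any unconditional density estimate, so Chebotarev is unavailable precisely where it is needed. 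The necessary congruence $p \equiv 1 \pmod d$ from the Weil pairing yields only $N_d(X) \ll X/\br{\phi(d)\log X}$ by Brun--Titchmarsh, and this is essentially sharp for the cyclotomic information alone (the maximal abelian subfield of $\L{d}$ is just $\Q(\zeta_d)$), hence one factor of $d$ short of what is required. The extra saving has to be extracted from the further constraint $d^{2} \mid \abs{\Ep(\fp)}$: I would combine $p \equiv 1 \pmod d$ with $d^{2} \mid p + 1 - a_p$ and apply a sieve across the two congruences (in the spirit of the square sieve / large sieve) to obtain $N_d(X) \ll_E X/\br{d^{1+\delta}\log X}$ for some fixed $\delta > 0$, uniformly in $d \le \sqrt X$. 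Granting this, $\abs{g(d)} \le 1$ gives
\[
\sum_{D < d \le \sqrt X + 1} \abs{g(d)}\, Q_d(X) \ll \frac{X^{2}}{\log X} \sum_{d > D} \frac{1}{d^{1+\delta}} \ll \frac{X^{2}}{D^{\delta}\log X} = o\br{\li{X^2}},
\]
and combining this with the main term proves $\sum_{p \le X}\expep \lesssim c_E \cdot \li{X^2}$. I expect this tail estimate --- beating Brun--Titchmarsh uniformly up to $d \asymp \sqrt X$ --- to be the main obstacle, and it is exactly the point at which one must settle for an upper bound in place of the asymptotic of Theorem \ref{Theorem 1.1}.
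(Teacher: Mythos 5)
Your reduction is sound up to the tail estimate, and you have correctly located the difficulty; but the step you defer --- the bound $N_d(X)\ll_E X/(d^{1+\delta}\log X)$ uniformly for $d\le\sqrt X$ --- is a genuine gap, not a technicality, and it does not follow from the tools you cite. Brun--Titchmarsh via $p\equiv 1\pmod d$ gives only $N_d(X)\ll X/(\phi(d)\log X)$, and since $|g(d)|/\phi(d)=\rad(d)/d^{2}$, which equals $1/d$ on squarefree $d$, this leaves a tail of size $X\cdot\frac{X}{\log X}\sum_{D<d\le\sqrt X}\rad(d)/d^{2}\asymp X^{2}$, which actually \emph{exceeds} the main term $c_E\cdot\li{X^2}\asymp X^{2}/\log X$. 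The second congruence does help elementarily: $d\mid p-1$ and $d^{2}\mid p+1-a_p$ force $a_p\equiv 2\pmod d$, and counting pairs $(a_p,p)$ gives $N_d(X)\ll(\sqrt X/d+1)(X/d^{2}+1)$ (this is the computation in Lemma \ref{Lemma 3.6}(a)); but this beats Brun--Titchmarsh only for $d\gg X^{1/4}$, so in the intermediate range, roughly $(\log X)^{1/14}\ll d\ll X^{1/4}$, neither estimate gives any saving over $1/(d\log X)$, let alone $d^{-1-\delta}$. No unconditional bound of the strength you want is known there: for non-CM curves the maximal abelian subfield of $\L{d}$ is $\Q(e^{2\pi i/d})$, as you yourself note, so abelian (class-field-theoretic) methods cap at Brun--Titchmarsh, and splitting in the full $\L{d}$ is exactly the Chebotarev problem that is out of unconditional reach. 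A telling consistency check: your identity $\sum_{p\le X}\expep=\sum_d g(d)Q_d(X)$ is exact, so granting your tail bound would yield the full asymptotic $\sum_{p\le X}\expep\sim c_E\cdot\li{X^2}$ unconditionally for every $E$, strictly stronger than Theorem \ref{Theorem 1.1}, which assumes GRH. The estimate you hope to extract from a square-sieve argument is thus not a lemma; it is essentially the whole problem.

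The paper avoids needing any per-$d$ bound in the intermediate range by splitting according to the size of $d_p$ itself rather than over divisors $d\mid d_p$. For primes with $d_p>Y$ it keeps the weight $1/d_p$ attached: partitioning by the exact value $j=d_p$ and applying Brun--Titchmarsh to $p\equiv 1\pmod j$,
\begin{align*}
\sums{p\le X}{\text{$p\nmid N$, $d_p>Y$}}\frac{p}{d_p}
\le X\sum_{Y<j\le 2\sqrt X}\frac{1}{j}
\sums{p\le X}{p\equiv 1\bmod j}1
\ll \frac{X^{2}}{\log X}\sum_{j>Y}\frac{1}{j\phi(j)}
\ll \frac{X^{2}}{Y\log X},
\end{align*}
which is $o\br{\li{X^2}}$ for any $Y\to\infty$; the extra factor $1/j$ here, which your divisor expansion discards (each prime with $d_p=j$ enters your double sum $\tau(j)$ times with weight $\approx p\,|g(d)|$), is exactly the missing factor of $d$. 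The divisor-type expansion is then applied only to primes with $d_p\le Y$, taking $Y\asymp\log\log X$ so that all moduli occurring in the truncated inclusion--exclusion over $h\mid\prod_{q\le Y}q$ stay within range of unconditional effective Chebotarev; that truncation over-counts (it admits primes for which $d_p/j$ has all prime factors exceeding $Y$), which is precisely why the theorem asserts only $\lesssim$ rather than an asymptotic. If you reorganize your argument along these lines, the rest of your steps go through.
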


We will now describe $c_{E}$ in more detail. With 
$\deg{\L{k}} \defeq [\L{k} : \Q]$ denoting the degree of the 
extension $\L{k}/\Q$, $\omega(k)$ the number of distinct prime 
factors of $k$, $\phi(k)$ the Euler totient function of $k$, and 
$\rad(k)$ the largest squarefree divisor of $k$, we have (whether 
or not $E$ has CM)
\begin{align}\label{(1.1)}
c_E \defeq  \sum_{k=1}^{\infty}
\frac{(-1)^{\omega(k)}\phi(\rad(k))}{k\deg{\L{k}}}.
\end{align}
In Lemma \ref{Lemma 3.5} below, we will show that this sum is 
absolutely convergent, and that $c_{E} \in (0,1)$. If $E$ does not 
have CM, there exists a universal constant 
\begin{align}\label{(1.2)}
c \defeq \prod_{q}\br{1 - \frac{q^3}{(q^2-1)(q^5-1)}} 
    = 0.8992282528\ldots,
\end{align}
such that $c_E/c$ is a rational number depending only on $E$. If 
$E$ has CM by an order $\OO$ in a imaginary quadratic number field 
$K$, $c_{E}$ can similarly be written as a rational number 
(depending on $E$) times an Euler product, depending only on $K$, 
of the form
\begin{align*}
\prod_{\text{$q$ splits in $K$}}
\br{1- \frac{1}{q^{2}(1-1/q)(1-1/q^{3})}}
\cdot
\prod_{\text{$q$ inert in $K$}}
\br{1- \frac{1}{q^{2}(1+1/q)(1-1/q^{3})}}.
\end{align*}
We will indicate how to prove the last two statements in 
Section \ref{Section 7}.

\subsection{Background and discussion}\label{Subsection 1.1}

{\em The multiplicative order of a number modulo $p$.}  Given a
rational number $g \ne 0, \pm 1$ and a prime $p$, let $l_{g}(p)$ 
denote the multiplicative order of $g$ modulo $p$ (unless 
$p$ divides $ab$, where $g = a/b$, $a,b$ coprime, in which case 
set $l_g(p) = 0$). In \cite{KP2010}, the second author and 
Pomerance, on assuming the Riemann hypothesis for Dedekind zeta 
functions associated with certain Kummer extensions, determined 
the average of $l_{g}(p)$ as $p \le X$ ranges over primes by 
showing that
\begin{align*}
\sum_{p\le X} l_{g}(p) 
= C_{g} \cdot \li{X^2} 
   + \BigO{\frac{X^2}{(\log X)^{3/2-2/\log\log\log X}}}, 
\end{align*}
where $C_{g}$ can be expressed in terms of the degrees of certain
Kummer extensions, namely
\begin{align*}
C_{g} \defeq
\sum_{k=1}^{\infty} 
     \frac{(-1)^{\omega(k)}\phi(k)\rad(k)}
          {k^{2}\cdot [\Q(g^{1/k},e^{2\pi i/k}):\Q]}
= \sum_{k=1}^{\infty} 
       \frac{(-1)^{\omega(k)}\phi(\rad(k))}
            {k \cdot [\Q(g^{1/k},e^{2\pi i/k}):\Q]}.
\end{align*}

Thus, even though we consider two rather different quantities
associated with groups modulo $p$, namely the multiplicative 
{\em order} of a fixed element modulo $p$ and the {\em exponent} 
of an elliptic curve modulo $p$, the sums defining $C_{g}$ and 
$c_{E}$ are very similar; the only difference is that degrees of 
Kummer fields replace degrees of $k$-divison fields. (Note that 
the exponent fluctuations for $(\Z/p\Z)^{\times}$ are essentially 
trivial since the group is cyclic.) Further, $C_{g}$ can also be 
written as the product of a rational number (depending on $g$) 
times a universal constant, namely 
$C \defeq \prod_q (1-q/(q^3-1)) = 0.5759599689\ldots$ (the product 
being over all primes $q$).

{\em Upper and lower bounds on $\expep$.} As $p \to \infty$, 
Hasse's bound implies that $|\Ep(\fp)|/p \sim 1$ which, together
with the rank of $\Ep(\fp)$ being at most two, implies that 
$\sqrt{p} \ll \expep \ll p$. For $E$ any non-CM curve, Schoof
\cite{S1991} improved the lower bound to 
$\expep \gg \sqrt{p} \cdot \log p/\log\log p$, and noted that this 
is unlikely to hold for CM curves since the curve $E$ defined by 
$y^{2} = x^{3}-x$ has exponent $\expep = \sqrt{p-1}$ for any prime 
of the form $p=(4n)^{2}+1$.

If one removes zero density subsets of the primes, Duke 
\cite{D2003} has significantly improved the lower bound. Namely, 
if $f: \R^{+} \to \R^+$ is any increasing function tending to 
infinity, $\expep > p/f(p)$ holds for `almost all' primes, in the 
sense that it holds for all but $o(\pi(X))$ primes $p \le X$. 
(As usual, $\pi(X)$ denotes the number of primes up to $X$.)
For CM curves the result is unconditional, whereas for non-CM 
curves GRH is assumed. (For the latter he also shows that the 
weaker bound $\expep > p^{3/4}/\log p$ holds unconditionally for 
almost all primes.)

Finally we mention that Shparlinski \cite{S2009} has shown that 
for any $\epsilon>0$ and $p$ large, $\expep \ge p^{1-\epsilon}$
holds for almost all elliptic curves $E$ in the family
$\{E_{a,b}\}_{a,b}$, where $E_{a,b}$ denotes the curve $y^{2} =
x^{3}+ax+b$.

{\em The proportion of primes for which $\Ep(\fp)$ is cyclic.} A 
question closely related to the size of the exponent is cyclicity
--- how often does the equality $|\Ep(\fp)| = \expep$ hold? 
Borosh, Moreno and Porta \cite{BMP1975} conjectured that 
$\Ep(\fp)$ is cyclic for infinitely many primes $p$, except in 
certain cases where this cannot be so for `trivial reasons'. Serre 
later proved \cite{S1978}, on GRH, that 
\begin{align}\label{(1.3)}
\begin{split}
\frac{1}{\pi(X)}
\sideset{}{^*}
\sum_{\substack{p \le X \\ \text{$\Ep(\fp)$ is  cyclic}}} 1 
\sim c^{*}_E \quad \text{as} \quad X\to\infty,
\end{split}
\end{align}
where $\textstyle\sum^{*}$ denotes a sum restricted to $p$ at 
which $E$ has good reduction, and, with $\mu(k)$ denoting the 
M\"obius function of $k$,
$c^{*}_E = \sum_{k=1}^{\infty} \mu(k)/\deg{\L{k}}$. Furthermore, 
$c^{*}_E > 0$ {\em unless} all $2$-torsion points on $E$ are 
defined over $\Q$, an obvious obstruction\footnote{The only way 
for $E(\Q)$ to have a rationally defined subgroup isomorphic to
$\Z/\ell\Z \times \Z/\ell\Z$ is if $\ell=2$; this easily follows 
from the fact that $E(\R)$ is isomorphic to either $\R/\Z$ or 
$\Z/2\Z \times \R/\Z$.} to $\Ep(\fp)$ being cyclic.

Cojocaru and Murty \cite{CM2004} obtained versions of 
\eqref{(1.3)} with effective error terms, and in the special case 
in which $E$ has CM, Murty \cite{M1983} was quite remarkably able 
to establish \eqref{(1.3)} {\em unconditionally} (the proofs were 
later significantly simplified by Cojocaru~\cite{C2003}).

For more background on this and related topics, we recommend the 
nice survey article \cite{C2004} by Cojocaru.

\section{Outline of the proof of Theorem 1.1}\label{Section 2}

We begin by noting that our approach is in spirit a synthesis of 
the ideas in \cite{S1978, KP2010}, together with refinements by 
Murty \cite{M1983} and Cojacaru \cite{C2003}.

As for notation, in this outline we shall use `$\approx$' to 
indicate equality with an implied error term, and $p$ shall always 
denote a prime of good reduction. Recall that 
$\expep = |\Ep(\fp)|/d_p$, so if $|\Ep(\fp)| \eqdef p + 1 - a_p$,
then, since $\ab{a_p} \le 2\sqrt{p}$ by Hasse's inequality, we 
have
\begin{align}\label{(2.1)}
\sum_{p \le X} \expep \approx \sum_{p \le X} \frac{p}{d_p}.
\end{align}
We can treat the sum $\sum_{p \le X} p/d_p$ by using partial 
summation and the prime number theorem, once we have evaluated the 
sum $\sum_{p \le X} \frac{1}{d_p}$. 

Since $d_p \mid e_p$ we have 
$d_p^2 \le |\Ep(\fp)| \le (\sqrt{p} + 1)^2$ by Hasse's inequality, 
hence $d_p < 2\sqrt{X}$ for $p \le X$. As in \cite{KP2010}, we use 
the elementary identity 
$\frac{1}{k} = \sum_{hj \mid k} \frac{\mu(h)}{j}$ to write
\begin{align}\label{(2.2)}
\sum_{p \le X} \frac{1}{d_p} 
  = \sum_{p \le X} \sum_{hj \mid d_p} \frac{\mu(h)}{j} 
    = \sum_{hj \le 2\sqrt{X}} \frac{\mu(h)}{j} 
         \sums{p \le X}{hj \mid d_p} 1.
\end{align}
Now, by Lemma \ref{Lemma 3.1}, a positive integer $k$ divides 
$d_p$ if and only if $p$ splits completely in $\L{k}$, and the 
Chebotarev density theorem (cf.~Lemma \ref{Lemma 3.3}) then 
gives 
\begin{align*}
\sums{p \le X}{k \mid d_p} 1 
  \approx
    \sumss
       {p \le X}
        {\text{$p$ splits completely}}
         {\text{in $\L{k}/\Q$}} 1 
   \approx \frac{\li{X}}{\deg{\L{k}}}.
\end{align*}
Thus, 
\begin{align}\label{(2.3)}
\sum_{hj \le 2\sqrt{X}} \frac{\mu(h)}{j} 
  \sums{p \le X}{hj \mid d_p} 1 
 \approx \li{X}
           \sum_{hj \le 2\sqrt{X}} 
             \frac{\mu(h)}{j}\cdot \frac{1}{\deg{\L{hj}}} 
 \approx \li{X} 
           \sum_{k=1}^{\infty} 
             \frac{(-1)^{\omega(k)}\phi(\rad(k))}{k\deg{\L{k}}}.
\end{align}

The last error term in \eqref{(2.3)}, and indeed showing that the 
last sum is absolutely convergent, involves bounding a sum of the 
type $\sum_{k > Y} \frac{1}{\deg{\L{k}}}$. We do this in 
Lemma \ref{Lemma 3.4}, but here lower bounds for $\deg{\L{k}}$ are 
crucial.  

To give lower bounds on $\deg{\L{k}}$, we use Serre's open image
theorem \cite{S1968, S1972}.  If $E$ is a non-CM curve, 
compactness together with the image being open gives that the 
image of the absolute Galois group has finite index inside
$\mathrm{GL}_{2}(\hat{\Z})$, hence 
$\deg{\L{k}} \gg_{E} \phi(k)k^{3}$. If $E$ is a CM curve, a 
similar open image result of Serre gives that 
$\deg{\L{k}} \gg \phi(k)^2$. (For more details, see 
Proposition \ref{Proposition 3.2}.)

Now, combining \eqref{(2.1)}, \eqref{(2.2)}, and \eqref{(2.3)}, we
obtain, via partial summation and the prime number theorem, that
\begin{align*}
\sum_{p \le X} \expep 
  \approx \li{X^2} \cdot
    \sum_{k=1}^{\infty} 
       \frac{(-1)^{\omega(k)}\phi(\rad(k))}{k\deg{\L{k}}}, 
\end{align*}
which is the claimed main term.

As for estimating the error terms, a slight complication arises 
--- even assuming GRH, we cannot directly bound the sum of the 
error terms in the Chebotarev density theorem for `large' $k$, 
that is, $k \in (\sqrt{X}/(\log X)^2, 2\sqrt{X}]$.  To deal with 
this range Serre used the fact that the cyclotomic field 
$\Q(e^{2\pi i/q})$ is contained in $\L{q}$; the sum can thus be 
restricted to primes $p \equiv 1 \bmod q$, and Brun's sieve is 
then enough to bound the errors arising from the large $k$. 
However, to make an exponent saving in the error term we use a 
refinement of Serre's approach due to Cojacaru and Murty 
\cite{CM2004} (see Lemma~\ref{Lemma 3.6} for further details.)

\section{Preliminaries}\label{Section 3}

In this section we collect some needed results on elliptic curves.

\subsection*{Notation} Throughout, $p$, $q$, and $\ell$ denote 
(rational) primes; $h$, $j$, $k$, and $m$ denote positive 
integers. The arithmetic functions $\omega$, $\phi$, $\rad$, and 
$\mu$ have already been introduced; also, $\tau(k)$ is the number 
of divisors of $k$, and $\sigma(k)$ is the sum of the divisors of 
$k$.  

Whenever we write $F = \BigO{G}$, $F \ll G$, or $G \gg F$, we mean 
that $|F| \le c\cdot G$ where $c$ is an absolute positive 
constant. By $F \asymp G$ we mean that $F \ll G \ll F$.

The logarithmic integral is defined for numbers $t \ge 2$
by $\li{t} \defeq \int_2^t \frac{\dd{u}}{\log u}$.

We fix an elliptic curve $E$, defined over $\Q$, of conductor $N$. 
The results in this section relate to $E$. For primes $p$ of good
reduction (that is, $p \nmid N$), $d_p$ and $e_p$ are the unique 
positive integers such that we have 
$\Ep(\fp) \cong \Z/d_p\Z \times \Z/e_p\Z$ with $d_p \mid e_p$. 
Thus $e_p$ is the exponent of $\Ep(\fp)$ if $p \nmid N$, and we 
set $e_p = 0$ if $p \mid N$. Also, if $p \nmid N$, 
$a_p \defeq p + 1 - |\Ep(\fp)|$, and $\pi_p$ denotes a root of the 
polynomial $X^2 - a_pX + p \in \Z[X]$.

The $k$-division field of $E$ is denoted $\L{k}$; $\deg{\L{k}}$ 
denotes the degree of the extension $\L{k}/\Q$, and $\disc{\L{k}}$ 
denotes its discriminant. 

\begin{lemma}\label{Lemma 3.1}
If $p \nmid kN$ then the following statements are equivalent.
\begin{enumerate}
 \item $\Ep(\fp)$ contains a subgroup isomorphic to
       $\mathbb{Z}/k\mathbb{Z} \times \mathbb{Z}/k\mathbb{Z}$.
 \item $p$ splits completely in $\L{k}$.
 \item $\frac{\pi_p - 1}{k}$ is an algebraic integer.
 \end{enumerate}
\end{lemma}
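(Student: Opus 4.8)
The plan is to use statement (1) as a hub and prove (1)$\iff$(2) and (1)$\iff$(3) separately. Throughout I fix a prime $p \nmid kN$. The two structural inputs I would record first are: (i) since $p \nmid k$, reduction modulo $p$ induces a $\Gal{\QQ}$-equivariant isomorphism from $E[k]$ onto the $k$-torsion of $\Ep(\fp)$; and (ii) the Frobenius endomorphism $\pi_p \in \mathrm{End}(\Ep)$ satisfies $\pi_p^2 - a_p\pi_p + p = 0$ and has fixed-point set exactly $\Ep(\fp)$, so that $\Ep(\fp) = \ker(\pi_p - 1)$. With these in hand, all three conditions become assertions about the single operator $\pi_p$ acting on $E[k]$.

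For (1)$\iff$(2), recall that $\L{k} = \Q(E[k])$ is Galois over $\Q$ and, since $p \nmid kN$, unramified at $p$. Under the mod-$k$ representation $\Gal{\L{k}} \hookrightarrow \mathrm{GL}_2(\Z/k\Z)$ coming from the action on $E[k]$, the Frobenius class at $p$ is carried to the matrix describing the action of $\pi_p$ on $\Ep(\fp)[k] \cong E[k]$. Hence $p$ splits completely in $\L{k}$ precisely when this Frobenius is trivial, that is, when $\pi_p$ fixes every point of $E[k]$; and that is equivalent to $E[k] \subseteq \Ep(\fp)$, which is exactly the assertion that $\Ep(\fp)$ contains a copy of $\Z/k\Z \times \Z/k\Z$.

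For (1)$\iff$(3), observe that (1) says $E[k] = \ker[k] \subseteq \ker(\pi_p - 1)$. Because $p \nmid k$ the map $[k]$ is separable with kernel $E[k]$, so this containment of kernels holds if and only if $\pi_p - 1$ factors through $[k]$, i.e. there is $\psi \in \mathrm{End}(\Ep)$ with $\pi_p - 1 = [k] \circ \psi$; equivalently $(\pi_p - 1)/k \in \mathrm{End}(\Ep)$. It then remains to match this membership with the integrality condition (3). One direction is immediate, since every endomorphism of $\Ep$ is integral over $\Z$: if $(\pi_p - 1)/k \in \mathrm{End}(\Ep)$ then it is an algebraic integer.

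The step I expect to be the main obstacle is the converse, deducing from the bare integrality of $(\pi_p - 1)/k$ that it genuinely lies in $\mathrm{End}(\Ep)$. This is subtle because $\mathrm{End}(\Ep)$ need not be the maximal order of $K = \Q(\pi_p)$, so an algebraic integer of $K$ need not preserve the relevant lattice; the operative condition is really membership in the endomorphism ring, and it is exactly here that $p \nmid k$ must be exploited. I would argue locally at each prime $\ell \mid k$ (necessarily $\ell \ne p$): the Tate module $T_\ell\Ep$ is free of rank two over $\Z_\ell$, it carries the commuting actions of $\pi_p$ and of $\mathrm{End}(\Ep) \otimes \Z_\ell$, and the claim reduces to checking that $(\pi_p - 1)/k$ preserves $T_\ell\Ep$. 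Alternatively one can invoke Lenstra's structure isomorphism $\Ep(\fp) \cong \mathrm{End}(\Ep)/(\pi_p - 1)\mathrm{End}(\Ep)$, from which $k \mid d_p$ translates directly into $(\pi_p - 1)/k \in \mathrm{End}(\Ep)$. This reduction, together with a case distinction between ordinary reduction (where $\mathrm{End}(\Ep)$ is an order in an imaginary quadratic field) and supersingular reduction (where it is an order in a quaternion algebra), is where essentially all of the genuine work lies.
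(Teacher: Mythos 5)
Your argument for (1) $\Leftrightarrow$ (2) is correct and standard, as is your reduction of (1), via separability of $[k]$, to the condition $(\pi_p - 1)/k \in \mathrm{End}(\Ep)$, and the easy implication (1) $\Rightarrow$ (3). (For comparison: the paper gives no argument of its own, citing Murty for (1) $\Leftrightarrow$ (2) and Cojocaru for (2) $\Leftrightarrow$ (3).) The genuine gap is exactly the step you defer, namely (3) $\Rightarrow$ (1), and it cannot be closed by the tools you name. Both the local analysis of $T_\ell\Ep$ and Lenstra's isomorphism $\Ep(\fp) \cong \mathrm{End}(\Ep)/(\pi_p - 1)\mathrm{End}(\Ep)$ characterize (1) in terms of the ring $\mathrm{End}(\Ep)$, whereas (3) is a condition on the abstract algebraic number $\pi_p$, hence depends only on the triple $(p, a_p, k)$. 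No condition depending only on that triple can be equivalent to (1): within a single ordinary isogeny class over $\fp$ (so with $p$ and $a_p$ fixed), every order between $\Z[\pi_p]$ and $\mathcal{O}_K$ occurs as the endomorphism ring of some curve (Deuring, Waterhouse), and by Lenstra's isomorphism the group structure of the $\fp$-points changes with the order.

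Concretely, take $p = 19$ and $a_p = 2$, so that one may take $\pi_p = 1 + 3\sqrt{-2}$, $K = \Q(\sqrt{-2})$, and $\Z[\pi_p]$ has conductor $3$. Choose a curve over $\mathbb{F}_{19}$ in this isogeny class whose endomorphism ring is $\Z[3\sqrt{-2}]$; its group of points is $\Z[3\sqrt{-2}]/(3\sqrt{-2}\,) \cong \Z/18\Z$, which is cyclic. Lifting its Weierstrass equation to $\Z$ gives a curve $E/\Q$ with good reduction at $19$, so $19 \nmid 3N$. For $k = 3$, condition (3) holds, since $(\pi_p - 1)/3 = \sqrt{-2}$ is an algebraic integer, while (1) and (2) fail, since $\Ep(\mathbb{F}_{19})$ contains no copy of $\Z/3\Z \times \Z/3\Z$. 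So (3) $\Rightarrow$ (1) is not merely the hard step: without a supplementary hypothesis it is false, and what your separability argument genuinely proves equivalent to (1) is the statement $(\pi_p - 1)/k \in \mathrm{End}_{\fp}(\Ep)$. Integrality coincides with this membership only when $\mathrm{End}(\Ep)$ is maximal at every prime dividing $k$ (as happens, for instance, for curves with CM by the maximal order), and any complete proof must supply an input of this kind rather than more group theory. Note that the paper itself only ever uses the implications (1) $\Leftrightarrow$ (2) and (1) $\Rightarrow$ (3) (the latter in the proof of Lemma 3.6(b), in the CM setting), and those directions your proposal does establish.
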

\begin{proof}
For the equivalence of (1) and (2), see \cite[Lemma 2]{M1983}. 
For the equivalence of (2) and (3), see \cite[Lemma 2.2]{C2003}.
\end{proof}

We now give some estimates on the degree of the $k$-division field 
of $E$.

\begin{proposition}\label{Proposition 3.2}
\noindent (a) $\L{k}$ contains $\Q(e^{2\pi i/k})$ (the $k$-th 
cyclotomic field), hence $p$ splits completely in $\L{k}$ only if 
$p \equiv 1 \bmod k$. Also, $\phi(k)$ divides $\deg{\L{k}}$. \\
\noindent (b) $\deg{\L{k}}$ divides 
$|\mathrm{GL}_2(\Z/k\Z)| 
  = k^{4}\prod_{q \mid k} \br{1 - 1/q}\br{1 - 1/q^2}$. \\
\noindent (c) If $E$ is a non-CM curve, then there exists a 
constant $B_E \ge 1$, depending only on $E$, such that
$|\mathrm{GL}_2(\Z/k\Z)| \le B_E\cdot \deg{\L{k}}$ for every $k$. 
\\
\noindent (d) If $E$ has CM, then 
$\phi(k)^2 \ll \deg{\L{k}} \le k^2$.
\end{proposition}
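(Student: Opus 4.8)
The plan is to route all four parts through the faithful mod-$k$ Galois representation. Since $\L{k} = \Q(E[k])$ is generated by the coordinates of the $k$-torsion, it is Galois over $\Q$, and $\Gal{\L{k}}$ acts faithfully on $E[k] \cong (\Z/k\Z)^{2}$; fixing a basis yields an injection $\rho_{k} \colon \Gal{\L{k}} \hookrightarrow \mathrm{GL}_{2}(\Z/k\Z)$, which I would use throughout.

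For part (a), I would invoke the Weil pairing $e_{k} \colon E[k] \times E[k] \to \mu_{k}$, which is Galois-equivariant and nondegenerate, so its values generate the group $\mu_{k}$ of $k$-th roots of unity; hence $\Q(e^{2\pi i/k}) \subseteq \L{k}$. Complete splitting passes to subfields, so if $p$ splits completely in $\L{k}$ then it splits completely in $\Q(e^{2\pi i/k})$, which for $p \nmid k$ means $p \equiv 1 \bmod k$; and from the tower $\L{k} \supseteq \Q(e^{2\pi i/k}) \supseteq \Q$ the degree $\phi(k) = [\Q(e^{2\pi i/k}):\Q]$ divides $\deg{\L{k}}$. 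Part (b) is then immediate from Lagrange's theorem applied to $\rho_{k}$, together with the standard order count: by the Chinese Remainder Theorem $\mathrm{GL}_{2}(\Z/k\Z) \cong \prod_{q^{a}\|k}\mathrm{GL}_{2}(\Z/q^{a}\Z)$, and the reduction $\mathrm{GL}_{2}(\Z/q^{a}\Z) \to \mathrm{GL}_{2}(\mathbb{F}_{q})$ with kernel of size $q^{4(a-1)}$ gives $|\mathrm{GL}_{2}(\Z/q^{a}\Z)| = q^{4(a-1)}(q^{2}-1)(q^{2}-q)$, which multiplies out to the displayed formula.

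The real content is in (c) and (d), both of which rest on Serre's open image theorems. For (c), non-CM, the image of the adelic representation $\mathrm{Gal}(\QQ/\Q) \to \mathrm{GL}_{2}(\hat{\Z})$ is open, hence of finite index; I would set $B_{E}$ to be that index. Since reduction modulo $k$ maps $\mathrm{GL}_{2}(\hat{\Z})$ onto $\mathrm{GL}_{2}(\Z/k\Z)$ and carries the adelic image onto $\rho_{k}(\Gal{\L{k}})$, the index of the latter in $\mathrm{GL}_{2}(\Z/k\Z)$ is at most $B_{E}$; writing $|\mathrm{GL}_{2}(\Z/k\Z)| = [\mathrm{GL}_{2}(\Z/k\Z) : \rho_{k}(\Gal{\L{k}})]\cdot \deg{\L{k}}$ then gives $|\mathrm{GL}_{2}(\Z/k\Z)| \le B_{E}\deg{\L{k}}$.

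For (d), CM by an order $\OO$ in $K$, I would work over $K$, where the endomorphisms are defined: $E[k]$ is then a module over $\OO/k\OO$, so $\rho_{k}(\mathrm{Gal}(\QQ/K))$ lies in the Cartan subgroup $C_{k} = (\OO/k\OO)^{\times} \subseteq \mathrm{GL}_{2}(\Z/k\Z)$, while complex conjugation forces $\rho_{k}(\Gal{\L{k}})$ into its normalizer $N(C_{k})$, with $[N(C_{k}):C_{k}] = 2$. Serre's open image result in the CM case bounds the index of $\rho_{k}(\mathrm{Gal}(\QQ/K))$ in $C_{k}$ independently of $k$; since a local computation shows $|C_{k}| = |(\OO/k\OO)^{\times}| \ge \phi(k)^{2}$ (each split, inert, or ramified local factor dominates $(1-1/q)^{2}$) and $\deg{\L{k}} \ge [K(E[k]):K] = |\rho_{k}(\mathrm{Gal}(\QQ/K))| \gg |C_{k}|$, the lower bound $\phi(k)^{2} \ll \deg{\L{k}}$ follows. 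For the upper bound I would split on whether $K \subseteq \L{k}$: if $K \not\subseteq \L{k}$, linear disjointness makes $\mathrm{Gal}(\QQ/K) \to \Gal{\L{k}}$ surjective, so $\rho_{k}(\Gal{\L{k}}) \subseteq C_{k}$ and $\deg{\L{k}} \le |C_{k}| \le k^{2}$; if $K \subseteq \L{k}$, then $\deg{\L{k}} = 2[\L{k}:K] \le 2|C_{k}|$, and I would absorb the factor $2$ using that $K \subseteq \L{k}$ forces a ramified local factor into $|(\OO/k\OO)^{\times}|$, pulling it below $k^{2}/2$. The genuinely hard part is the input from Serre rather than the deductions above; granting it, the only delicate bookkeeping is pinning the constant $1$ in $\deg{\L{k}} \le k^{2}$, since the crude normalizer bound only yields $2k^{2}$, and if the case analysis proves fussy I would settle for $\deg{\L{k}} \ll k^{2}$, which is all the later sections require.
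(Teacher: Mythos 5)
Your proposal follows the same route as the paper's proof for parts (a)--(c) and for the lower bound in (d): part (a) is the Weil-pairing argument (the paper simply cites Silverman for exactly this statement), part (b) is faithfulness of the mod-$k$ representation plus Lagrange and the standard order count, part (c) is Serre's open image theorem plus compactness together with the observation that indices can only decrease under the surjection $\mathrm{GL}_2(\hat{\Z}) \twoheadrightarrow \mathrm{GL}_2(\Z/k\Z)$, and the lower bound in (d) is Serre's CM open image statement for $\prod_{\ell}(\OO \otimes \Z_{\ell})^{\times}$ combined with $|(\OO/k\OO)^{\times}| \ge \phi(k)^{2}$. All of this is correct and is essentially what the paper does, only in more detail.

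Where you diverge is the upper bound in (d), and your hesitation there is not mere fussiness: the bound $\deg{\L{k}} \le k^{2}$ with constant $1$, as stated in the proposition and justified in the paper by the single line ``$|(\OO/k\OO)^{\times}| \le k^{2}$'', is in fact false in general, and your proposed repair cannot work. The obstruction is that $K \subseteq \L{k}$ holds for \emph{every} $k \ge 3$ (a standard fact about CM curves over $\Q$, used in \cite{M1983, C2003}), whether or not any prime factor of $k$ ramifies in $K$, so there need not be any ``ramified local factor'' available to absorb the $2$ coming from the normalizer. Concretely, take $E : y^{2} = x^{3} - x$ (CM by $\Z[i]$) and $k = 3$, which is inert and unramified in $\Q(i)$: here $\Q(E[3])$ contains the degree-$8$ field $\Q(u,i)$, where $3u^{4} - 6u^{2} - 1 = 0$, and the $y$-coordinate above $x = u$ generates a further quadratic extension, since $u^{3} - u > 0$ while its $\Q(u)/\Q$-norm equals $-16/27 < 0$, so it is not a square in $\Q(u,i)$; hence $\deg{\L{3}} = 2\br{3^{2} - 1} = 16 > 9$, the Galois group being the full normalizer of a nonsplit Cartan subgroup of $\mathrm{GL}_2(\Z/3\Z)$. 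More generally, for all but finitely many primes $\ell$ one has $\deg{\L{\ell}} = 2\,|(\OO/\ell\OO)^{\times}|$, which exceeds $\ell^{2}$ for every inert $\ell$ and every split $\ell \ge 5$. So your fallback $\deg{\L{k}} \ll k^{2}$ (indeed $\le 2k^{2}$, via $[\L{k}:\Q] \le 2[K\L{k}:K] \le 2\,|(\OO/k\OO)^{\times}|$) is not a retreat but the correct statement, and it is all the paper ever uses: Lemma \ref{Lemma 3.3}(a)(ii) and the remarks in Section \ref{Section 8} only need an upper bound of order $k^{2}$ with some absolute constant, and Lemma \ref{Lemma 3.4} uses only the lower bound. (The same factor of $2$ is missing from the CM formulas for $\deg{\L{q^{k}}}$ quoted in Section \ref{Section 7}.) In short: your proof is the paper's proof, except that on this one point you are more careful than the paper itself; keep the $\ll k^{2}$ formulation.
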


\begin{proof}
(a) See \cite[Corollary 8.1.1, Chapter III]{S1986}.

(b) First of all note that $\deg{\L{k}} = |\Gal{\L{k}}|$ as 
$\L{k}/\Q$ is a Galois extension. The action of the absolute 
Galois group $\Gal{\QQ}$ on the $k$-torsion subgroup $E[k]$ 
of $E$ induces a representation
$\rho_{E,k} : \Gal{\QQ} 
  \to \mathrm{Aut}(E[k])
    \cong \mathrm{GL}_2(\mathbb{Z}/k\mathbb{Z})$,
which is injective.

(c) With $T_{\ell}(E)$ denoting the $\ell$-adic Tate module
of $E$, the action of $\Gal{\QQ}$ on 
$\prod_{\ell}\mathrm{Aut}(T_{\ell}(E)) 
  \cong \prod_{\ell} \mathrm{GL}_{2}(\Z_{\ell})$ 
induces a representation 
$\rho_{E} : \Gal{\QQ} 
  \to \prod_{\ell}\mathrm{GL}_{2}(\Z_{\ell})$.  
By Serre's open image theorem \cite[Theorem 3]{S1972}, the image
of $\rho_{E}$ is open, and since 
$\prod_{\ell} \mathrm{GL}_{2}(\Z_{\ell})$ is compact, the image is  
of finite index. Since $\Gal{\L{k}}$ is isomorphic to the 
projection of $\mathrm{Im}(\rho_{E})$, by the map 
$\prod_{\ell} \mathrm{GL}_{2}(\Z_{\ell}) 
   \to \mathrm{GL}_{2}(\Z/k\Z)$, the result follows.

(d) If $E$ has CM, Serre's open image result for the CM case 
\cite[Corollaire, Th\'eor\`eme 5, p.302]{S1972} gives that 
$\deg{\L{k}} \gg \phi(k)^2$.  (If $E$ has CM by an order 
$\OO$, the image is open in $\prod_{\ell} \OO_{\ell}^{\times}$, 
where $\OO_{\ell} = \OO \otimes \Z_{\ell}$.) The upper bound 
follows from $|(\OO/k\OO)^{\times}| \le k^{2}$.
\end{proof}

\begin{lemma}\label{Lemma 3.3}
(a) There exist absolute constants $c_1, B > 0$, such that the 
following statements hold. 
(i) If $c_1k^{14}N^2 \le \log X$, then, whether or not $E$ has CM, 
\begin{align}\label{(3.1)}
|\{ \text{$p \le X$ : $p \nmid N$ and $k \mid d_p$}\}|  
  = \frac{\li X}{\deg{\L{k}}} 
   + \BigO{X\exp\br{-B(\log X)^{5/14}}}.
\end{align}
(ii) If $c_1k^{8}N^2 \le \log X$ and $E$ has CM, then
\begin{align}\label{(3.1b)}
|\{ \text{$p \le X$ : $p \nmid N$ and $k \mid d_p$}\}|  
  = \frac{\li X}{\deg{\L{k}}} 
   + \BigO{X\exp\br{-B(\log X)^{3/8}}}.
\end{align}

(b) Whether or not $E$ has CM, for $X \ge 2$ we have, on GRH, that
\begin{align}\label{(3.2)}
 |\{ \text{$p \le X$ : $p \nmid N$ and $k \mid d_p$}\}| 
   = \frac{\li X}{\deg{\L{k}}} 
     + \BigO{X^{1/2}\log(XN) }.
\end{align}
\end{lemma}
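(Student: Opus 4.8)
Lemma 3.3 is a Chebotarev-type counting result. Let me understand what it states.The plan is to convert each of the three counting problems into an application of an effective Chebotarev density theorem. By Lemma~\ref{Lemma 3.1}, for $p \nmid kN$ the condition $k \mid d_p$ is equivalent to $p$ splitting completely in $\L{k}$, i.e.\ to the Frobenius class of $p$ in $\Gal{\L{k}}$ being trivial. The primes dividing $k$ number $\BigO{\omega(k)} = \BigO{\log k}$ and are absorbed by every error term below, so the quantity to estimate is essentially the number $\pi_{1}(X)$ of primes $p \le X$ splitting completely in $\L{k}$. For the identity conjugacy class $C = \Br{1}$ one has $\abs{C}/\abs{\Gal{\L{k}}} = 1/\deg{\L{k}}$, which produces precisely the main term $\li{X}/\deg{\L{k}}$; the whole problem is therefore to bound the Chebotarev error uniformly in $k$ and $N$.

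Two standard inputs feed into this. First, since $\L{k}/\Q$ ramifies only at primes dividing $kN$, the conductor--discriminant formula together with standard bounds on wild ramification gives $\log\abs{\disc{\L{k}}} \ll \deg{\L{k}}\log(kN)$, with an absolute implied constant. Second, the degree bounds of Proposition~\ref{Proposition 3.2}: part~(b) yields $\deg{\L{k}} \le \abs{\mathrm{GL}_2(\Z/k\Z)} \ll k^{4}$ in general, while part~(d) improves this to $\deg{\L{k}} \ll k^{2}$ when $E$ has CM. These two facts are what let me convert the intrinsic field invariants $\disc{\L{k}}$ and $\deg{\L{k}}$ appearing in Chebotarev into the explicit quantities $k$ and $N$ of the statement.

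For part~(b) I would invoke the conditional (GRH) form of the effective Chebotarev density theorem of Lagarias--Odlyzko, in the refined form due to Serre, which for the trivial class gives an error $\ll \br{X^{1/2}/\deg{\L{k}}}\br{\log\abs{\disc{\L{k}}} + \deg{\L{k}}\log X}$. Substituting the discriminant bound collapses this to $\ll X^{1/2}\br{\log(kN) + \log X}$. Since a prime splitting completely in $\L{k}$ satisfies $p \equiv 1 \bmod k$ by Proposition~\ref{Proposition 3.2}(a), any counted prime forces $k < p \le X$. If $k < X$, then $\log(kN) < \log(XN)$ and the error is $\BigO{X^{1/2}\log(XN)}$; if $k \ge X$ the count is empty, and since $\phi(k) \mid \deg{\L{k}}$ the discarded main term $\li{X}/\deg{\L{k}} \le \li{X}/\phi(k) \ll X^{1/2}$ is likewise $\BigO{X^{1/2}\log(XN)}$. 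Either way \eqref{(3.2)} follows.

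For part~(a) I would use the unconditional effective Chebotarev theorem in the form giving an error $\ll X\exp\br{-B\br{\log X/\deg{\L{k}}}^{1/2}}$, valid once $\log X \gg \deg{\L{k}}\br{\log\abs{\disc{\L{k}}}}^{2}$. By the discriminant and degree bounds the right-hand side here is $\ll k^{12}(\log kN)^{2}$, so this range condition is comfortably implied by $c_1 k^{14}N^{2} \le \log X$; and then $\deg{\L{k}} \ll k^{4} \le (\log X)^{2/7}$ gives $\br{\log X/\deg{\L{k}}}^{1/2} \gg (\log X)^{1/2-1/7} = (\log X)^{5/14}$, which is \eqref{(3.1)}. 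In the CM case the stronger $\deg{\L{k}} \ll k^{2}$ replaces $14$ by $8$ in the range and turns the exponent into $1/2 - 1/8 = 3/8$, giving \eqref{(3.1b)}. The hard part will be the exceptional (Siegel) zero that the unconditional theorem permits: it contributes a secondary main term $\ll \li{X^{\beta_1}}/\deg{\L{k}}$. I would dispose of it using the standard lower bound $1 - \beta_1 \gg 1/\log\abs{\disc{\L{k}}}$, noting that in the stated range $\log\abs{\disc{\L{k}}} \ll (\log X)^{2/7}\log\log X$, so that $\li{X^{\beta_1}} \ll X\exp\br{-c(\log X)^{1/2}}$ is dominated by the claimed error; the only price is that the resulting constant is ineffective, which is consistent with the mere existence of $c_1, B$ asserted in the statement.
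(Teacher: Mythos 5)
Your overall skeleton coincides with the paper's: reduce to complete splitting in $\L{k}$ via Lemma \ref{Lemma 3.1} (absorbing the primes dividing $kN$), bound the discriminant by $\log\ab{\disc{\L{k}}} \ll \deg{\L{k}}\log(kN)$ using that only $p \mid kN$ ramify, feed in the degree bounds $\deg{\L{k}} \le k^4$ (resp.\ $k^2$ for CM) from Proposition \ref{Proposition 3.2}, and apply effective Chebotarev; your part (b) is essentially the paper's argument verbatim. The genuine gap is in part (a), at exactly the step you yourself flag as the hard part: you dispose of the exceptional zero using the bound $1-\beta_1 \gg 1/\log\ab{\disc{\L{k}}}$, and this bound is false for the fields at hand. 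Stark's effective zero-repulsion theorem gives $1-\beta_1 \gg 1/\log\ab{d_L}$ for a number field $L$ \emph{only when $L$ has no quadratic subfield}; in general one only has the much weaker effective bound $1-\beta_1 \gg \ab{d_L}^{-1/n_L}$ (a bound of the strength you claim, even ineffective, would essentially eliminate Siegel zeros of quadratic fields, far beyond what is known). Since $\L{k}$ contains the cyclotomic field $\Q(e^{2\pi i/k})$, it has quadratic subfields for every $k \ge 3$, so the strong form of Stark's bound is unavailable precisely in the range where you need it; and appealing to Siegel instead only yields $1-\beta_1 \gg_{\epsilon} \ab{d_F}^{-\epsilon}$ for the responsible quadratic subfield $F$, which is still nowhere near $1/\log\ab{\disc{\L{k}}}$.

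The repair is to use the general (effective) Stark bound, and this is exactly why the hypothesis of the Chebotarev input quoted in the paper (Murty's lemma; condition \eqref{(3.5)}) contains the term $\deg{\L{k}}\ab{\disc{\L{k}}}^{2/\deg{\L{k}}} \le \log X$ alongside the term $\deg{\L{k}}\br{\log\ab{\disc{\L{k}}}}^2$ that you checked — you verified only half of the required range condition because your framework kept the exceptional zero explicit. Concretely, \eqref{(3.6)} gives $\ab{\disc{\L{k}}}^{1/\deg{\L{k}}} \le \deg{\L{k}}\cdot kN \le k^{5}N$, and under $c_1k^{14}N^2 \le \log X$ this is $\le k^{7}N \ll (\log X)^{1/2}$; hence $1-\beta_1 \gg \ab{\disc{\L{k}}}^{-1/\deg{\L{k}}} \gg (\log X)^{-1/2}$, so $\li{X^{\beta_1}} \ll X\exp\br{-c(\log X)^{1/2}}$, which is dominated by $X\exp\br{-B(\log X)^{5/14}}$ (and by the $3/8$-exponent term in the CM case). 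This also corrects your closing remark: no ineffectivity is incurred, since Stark's general bound, unlike Siegel's, is effective. With this substitution your argument for (a)(i) and (a)(ii) becomes a correct unpacking of what the paper obtains in one step by citing Murty's effective Chebotarev lemma under hypothesis \eqref{(3.5)}.
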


\begin{proof} 
Note that if $p \le X$ and $p \nmid N$, then {\em a priori} we 
have $k \le 2\sqrt{X}$ for $k \mid d_p$, because $d_p \mid e_p$ 
and so $d_p^2 \le |\Ep(\fp)| \le (\sqrt{p} + 1)^2$ by Hasse's 
inequality. Since $p \nmid d_p$ 
(cf.~\cite[Exercise 5.6(a), p.145]{S1986}), the conditions 
$p \nmid N$ and $k \mid d_p$ are equivalent to $p \nmid kN$ and 
$k \mid d_p$, that is $p \nmid kN$ and $\Ep(\fp)$ contains a 
subgroup isomorphic to $\Z/k\Z \times \Z/k\Z$. Therefore, by 
Lemma \ref{Lemma 3.1}, 
\begin{align}\label{(3.3)}
 \sumss{p \le X}
       {\text{$p$ \rm{splits completely}}}
       {\text{\rm{in} $\L{k}/\Q$}} 1  
  = |\{ \text{$p \le X$ : $p \nmid N$ and $k \mid d_p$}\}| 
    + \BigO{\log(XN)},
\end{align}
where the $\BigO{\log(XN)}$ term is the negligible contribution 
from the primes $p$ dividing $kN$: 
$\omega(kN) \ll \log(kN) \ll \log(XN)$.

(a) (i) As $\L{k}/\Q$ is a Galois extension, an effective form of 
the Chebotarev density theorem (for example, see 
\cite[Lemma 2]{M1984}) gives
\begin{align}\label{(3.4)}
\sumss{p \le X}
      {\text{$p$ \rm{splits completely}}}
      {\text{\rm{in} $\L{k}/\Q$}} 1
 = \frac{\li X}{\deg{\L{k}}} 
 + \BigO{X\exp\br{-B\sqrt{\textstyle\frac{\log X}{\deg{\L{k}}}}}},
\end{align}
where $B$ is an absolute positive constant, provided
 \begin{align}\label{(3.5)}
 c\cdot \max\Br{\deg{\L{k}}\ab{\disc{\L{k}}}^{2/\deg{\L{k}}}, 
 \deg{\L{k}}\br{\log \ab{\disc{\L{k}}}}^2}
   \le \log X,
 \end{align}
for a certain absolute positive constant $c$. We claim that
there is an absolute positive constant $c_1$ such that if 
$c_1k^{14}N^2 \le \log X$, then \eqref{(3.5)} does indeed
hold. In that case, Proposition \ref{Proposition 3.2}(b) gives 
$\sqrt{\deg{\L{k}}} \le k^2 \le (\log X)^{1/7}$ 
(we may suppose that $c_1 \ge 1$), so
applying this to the error term in \eqref{(3.4)} 
and combining with \eqref{(3.3)} gives \eqref{(3.1)}. 

We now prove our claim. The first of the following sequence
of inequalities holds with any Galois extension $L/\Q$
in place of $\L{k}/\Q$ \cite[Proposition 6, p.130]{S1981};
the second holds because the ramified primes of 
$\L{k}/\Q$ are divisors of $kN$
\cite[Proposition 4.1(a), Chapter VII]{S1986}:
\begin{align}\label{(3.6)}
 \frac{\log \ab{\disc{\L{k}}}}{\deg{\L{k}}} 
  \le \log \deg{\L{k}} 
    + \hspace{-8pt} 
       \sums{\text{$p$ ramifies in}}
            {\L{k}/\Q} \hspace{-8pt} \log p 
    \le \log \deg{\L{k}} + \sum_{p \mid kN} \log p 
     \le \log \deg{\L{k}} + \log(kN).
\end{align}
Our claim follows straightforwardly using this and the inequality 
$\deg{\L{k}} \le k^4$.

(ii) Similar, but in the CM case we use the fact that 
$\deg{\L{k}} \le k^2$, by Proposition \ref{Proposition 3.2}(d).

(b) By \cite[Th\'eor\`eme 4, p.133]{S1981}, on GRH we have
\begin{align*}
 \sumss{p \le X}
       {\text{$p$ \rm{splits completely}}}
       {\text{\rm{in} $\L{k}/\Q$}} 1  
  = \frac{\li X}{\deg{\L{k}}} 
   + \BigO{X^{1/2}
        \br{\frac{\log \ab{\disc{\L{k}}}}{\deg{\L{k}}} + \log X}}.
\end{align*}
Applying \eqref{(3.6)} and the inequality 
$\deg{\L{k}} \le k^4 \ll X^2$, we obtain \eqref{(3.2)} by putting 
this into \eqref{(3.3)}.
\end{proof}

\begin{lemma}\label{Lemma 3.4}
 With $B_E$ as in Proposition \ref{Proposition 3.2}(c), we have
\begin{align}\label{(3.7)}
 \sum_{k > Y} \frac{1}{\deg{\L{k}}} 
   \ll 
     \begin{cases}
       1/Y   
         & \text{if $E$ has CM}, \\
       B_E/Y^3 
         & \text{if $E$ is a non-CM curve},
     \end{cases}
\end{align}
and
\begin{align}\label{(3.8)}
 \sum_{k > Y} \frac{\sigma(k)\tau(k)}{k\deg{\L{k}}} 
   \ll 
     \begin{cases}
       (\log Y)/Y
         & \text{if $E$ has CM}, \\
       B_E(\log Y)/Y^3
         & \text{if $E$ is a non-CM curve}.
     \end{cases}         
\end{align}
\end{lemma}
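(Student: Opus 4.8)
The plan is to reduce each tail sum to the tail of an explicit nonnegative multiplicative function, and then estimate the latter by combining a Dirichlet convolution with partial summation. The only inputs about $E$ are the degree lower bounds from Proposition \ref{Proposition 3.2}. In the non-CM case, parts (b) and (c) give
\begin{align*}
\deg{\L{k}} \ge \frac{|\mathrm{GL}_2(\Z/k\Z)|}{B_E}
  = \frac{k^4}{B_E}\prod_{q \mid k}\br{1 - \frac1q}\br{1 - \frac1{q^2}}
  \ge \frac{6}{\pi^2 B_E}\, k^3\phi(k),
\end{align*}
using $\prod_{q\mid k}(1-1/q) = \phi(k)/k$ and $\prod_{q\mid k}(1-1/q^2) \ge \prod_q(1-1/q^2) = 6/\pi^2$; hence $1/\deg{\L{k}} \ll B_E/(k^3\phi(k))$. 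In the CM case, part (d) gives $1/\deg{\L{k}} \ll 1/\phi(k)^2$. Thus \eqref{(3.7)} reduces to $\sum_{k>Y} g(k)/k^4 \ll 1/Y^3$ with $g(k) = k/\phi(k)$ (non-CM) and to $\sum_{k>Y} g(k)/k^2 \ll 1/Y$ with $g(k) = (k/\phi(k))^2$ (CM); similarly \eqref{(3.8)} reduces to the same two sums but with $g(k)$ multiplied by $\sigma(k)\tau(k)/k$.

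The engine is a two-step estimate. First I would show that each multiplicative weight $g$ above satisfies a partial-sum bound of the shape
\begin{align*}
\sum_{k \le t} g(k) \ll t(\log t)^{a},
\end{align*}
with $a = 0$ for the two sums relevant to \eqref{(3.7)} and $a = 1$ for those relevant to \eqref{(3.8)}. Granting this, partial summation finishes the job at once: for $b > 1$,
\begin{align*}
\sum_{k > Y} \frac{g(k)}{k^b}
  \ll \int_Y^\infty \frac{(\log t)^a}{t^{b}}\dd{t}
  \ll \frac{(\log Y)^a}{Y^{b-1}},
\end{align*}
which yields $1/Y^3$, $1/Y$, $(\log Y)/Y^3$, and $(\log Y)/Y$ in the four cases upon taking $(a,b) = (0,4),(0,2),(1,4),(1,2)$ respectively --- exactly the claimed bounds (with the factor $B_E$ reinstated in the non-CM cases).

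It remains to prove the partial-sum bounds, and here the key device is to strip off a divisor function by Dirichlet convolution. Writing $\tau_0 \defeq 1$ (the constant function) and $\tau_1 \defeq \tau$, I would produce for each $g$ a factorization $g = \tau_a * h$ in which $h$ is multiplicative and satisfies $\sum_d |h(d)|/d < \infty$. For instance, for $g(k) = k/\phi(k)$ one has the exact identity $k/\phi(k) = \sum_{d \mid k}\mu(d)^2/\phi(d)$, i.e.\ $g = 1 * h$ with $h(d) = \mu(d)^2/\phi(d)$ and $\sum_d \mu(d)^2/(d\phi(d)) < \infty$; the other three weights are handled the same way once one checks that $h = g * \mu$ (resp.\ $g * \mu * \mu$) is supported essentially on squarefree $d$ with $h(q) = \BigO{1/q}$, which holds because $g(q^j)/\tau_a(q^j) = 1 + \BigO{1/q}$ uniformly in $j$. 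Once $g = \tau_a * h$ is in hand,
\begin{align*}
\sum_{k \le t}g(k)
  = \sum_{d} h(d)\sum_{m \le t/d}\tau_a(m)
  \ll t(\log t)^a \sum_{d}\frac{|h(d)|}{d}
  \ll t(\log t)^a,
\end{align*}
using the elementary bounds $\sum_{m\le x}1 \le x$ and $\sum_{m \le x}\tau(m) \ll x\log x$.

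The main obstacle is simply to obtain the clean powers of $Y$ (and the clean power of $\log Y$) rather than spurious $\log\log Y$ or $Y^{\epsilon}$ losses. A crude pointwise bound such as $\phi(k) \gg k/\log\log k$ inflates \eqref{(3.7)} by a factor $\log\log Y$, while Rankin's trick only delivers $Y^{-3+\epsilon}$; both are too lossy. The convolution-plus-partial-summation route avoids this because it captures the true average order of $g$, so the whole difficulty is concentrated in verifying the single quantitative fact $\sum_d |h(d)|/d < \infty$ --- equivalently, that the Dirichlet series $\sum_k g(k)/k^s$ has its rightmost singularity exactly at $s = 1$, of order $a+1$. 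This is a short multiplicative computation in each of the four cases.
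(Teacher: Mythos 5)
Your proposal is correct and follows essentially the same route as the paper: both reduce the tails via Proposition \ref{Proposition 3.2}(c),(d) to elementary multiplicative sums, and both evaluate those sums by a Dirichlet-convolution identity plus partial summation --- your factorization $g = \tau_a * h$ with $\sum_d |h(d)|/d < \infty$ is precisely a systematic packaging of the paper's repeated use of the identity $k/\phi(k) = \sum_{j \mid k} |\mu(j)|/\phi(j)$ together with the bound $\sum_{k \le t} \sigma(k)\tau(k) \ll t^2 \log t$. The paper leaves these computations to the reader, so your write-up simply supplies the same details in a more unified form.
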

\begin{proof}
In the CM case we have, by Proposition \ref{Proposition 3.2}(d), 
that
\begin{align}\label{(3.9)}
 \sum_{k > Y} \frac{1}{\deg{\L{k}}} 
  \ll \sum_{k > Y} \frac{1}{\phi(k)^2} 
   \ll \frac{1}{Y}.
\end{align}
The last bound holds because $\phi(k) \approx k$ `on average'. 
It can be proved by entirely elementary means, the key being
the identity 
$\frac{k}{\phi(k)} = \sum_{j \mid k} \frac{|\mu(j)|}{\phi(j)}$.
We spare the reader the details. Similarly, since 
$|\mathrm{GL}_2(\Z/k\Z)| 
 = k^{4}\prod_{q \mid k} \br{1 - 1/q}\br{1 - 1/q^2}
  \gg k^3\phi(k)$,
Proposition \ref{Proposition 3.2}(c) gives
\begin{align*}
 \sum_{k > Y} \frac{1}{\deg{\L{k}}}  
    \ll B_E \sum_{k > Y} \frac{1}{k^3\phi(k)}
      \ll \frac{B_E}{Y^3}
\end{align*}
in the non-CM case.

Again if $E$ has CM, we have
\begin{align*}
 \sum_{k > Y} \frac{\sigma(k)\tau(k)}{k\deg{\L{k}}} 
  \ll \sum_{k > Y} \frac{\sigma(k)\tau(k)}{k\phi(k)^2} 
   \ll \frac{\log Y}{Y}.
\end{align*}
One way to obtain the last bound is to establish that
$\sum_{k \le Y} \sigma(k)\tau(k) \ll Y^2\log Y$, then apply 
partial summation to show that 
$\sum_{k > Y} \sigma(k)\tau(k)/k^3 \ll (\log Y)/Y$, and use the 
identity 
$\frac{k}{\phi(k)} = \sum_{j \mid k} \frac{|\mu(j)|}{\phi(j)}$ two
more times. Similarly, if $E$ is a non-CM curve, then
\begin{align*}
 \sum_{k > Y} \frac{\sigma(k)\tau(k)}{k\deg{\L{k}}} 
  \ll B_E \sum_{k > Y} \frac{\sigma(k)\tau(k)}{k^4\phi(k)} 
   \ll \frac{B_E\log Y}{Y^3}.
\end{align*}
\end{proof}

\begin{lemma}\label{Lemma 3.5}
The sum in \eqref{(1.1)} defining $c_E$ is absolutely convergent, 
and $c_E \in (0,1)$. 
\end{lemma}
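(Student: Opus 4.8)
The plan is to treat the two assertions separately: absolute convergence is a quick consequence of Lemma \ref{Lemma 3.4}, whereas the two-sided bound $c_E\in(0,1)$ I would obtain by recognizing $c_E$ as the mean value of $1/d_p$. For convergence, note that $\phi(\rad(k))\le\rad(k)\le k$, so the $k$-th term of \eqref{(1.1)} is bounded in absolute value by $1/\deg{\L{k}}$. By Lemma \ref{Lemma 3.4} (with $Y=1$, say) the tail $\sum_{k>1}1/\deg{\L{k}}$ is finite, whether or not $E$ has CM, hence $\sum_{k\ge1}1/\deg{\L{k}}<\infty$ and the series defining $c_E$ converges absolutely.

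For the bounds I would first record the elementary Möbius identity $\frac1d=\sum_{k\mid d}\frac{(-1)^{\omega(k)}\phi(\rad(k))}{k}$, which follows by inversion over the divisor lattice from $\sum_{m\mid n}\mu(m)m=(-1)^{\omega(n)}\phi(\rad(n))$. The degree bound $\deg{\L{k}}\gg\phi(k)^2$ of Proposition \ref{Proposition 3.2}(d) gives $\sum_k\tau(k)/\deg{\L{k}}<\infty$, and this absolute convergence lets me rearrange freely. Writing $\nu(d)\defeq\sum_{m\ge1}\mu(m)/\deg{\L{dm}}$ and collecting terms according to $n=dm$, the identity $\sum_{m\mid n}\mu(m)=[n=1]$ yields $\sum_d\nu(d)=1$, while the same rearrangement weighted by $1/d$ returns exactly \eqref{(1.1)}, so that
\[
c_E=\sum_{d\ge1}\frac{\nu(d)}{d},\qquad \sum_{d\ge1}\nu(d)=1.
\]

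Next I would identify $\nu(d)$ as the natural density of $\{p:d_p=d\}$. Since $d_p=d$ is equivalent to $dm\mid d_p$ for no $m\ge2$, inclusion--exclusion gives $\#\{p\le X:d_p=d\}=\sum_m\mu(m)\,\#\{p\le X:dm\mid d_p\}$, a finite sum because $dm\le2\sqrt X$; combining the Chebotarev counts of Lemma \ref{Lemma 3.3} with the tail estimate of Lemma \ref{Lemma 3.4} then yields $\nu(d)=\lim_{X\to\infty}\pi(X)^{-1}\#\{p\le X:d_p=d\}\ge0$. Thus $(\nu(d))_{d\ge1}$ is a genuine probability distribution and $c_E=\mathbf{E}[1/D]$ for a random variable $D$ with $\Pr(D=d)=\nu(d)$ and $\Pr(k\mid D)=1/\deg{\L{k}}$. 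Positivity is then immediate, since some $\nu(d)>0$ while $1/d>0$; for the upper bound I would use $\Pr(D\ge2)\ge\Pr(2\mid D)=1/\deg{\L{2}}>0$, whence $c_E\le\nu(1)+\tfrac12(1-\nu(1))=\tfrac12(1+\nu(1))\le1-\tfrac{1}{2\deg{\L{2}}}<1$.

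The main obstacle is the nonnegativity $\nu(d)\ge0$, that is, justifying the interchange of the limit $X\to\infty$ with the infinite Möbius sum defining $\nu(d)$ --- equivalently, ruling out escape of mass to large values of $d_p$. The delicate range is $k$ near $2\sqrt X$, where the error term in the Chebotarev density theorem is useless; there one must instead bound $\#\{p\le X:k\mid d_p\}$ uniformly by a sieve argument, using $p\equiv1\bmod k$ together with $k^2\mid\abs{\Ep(\fp)}$ (as in the work of Serre and Cojocaru--Murty, cf.\ Lemma \ref{Lemma 3.6}), and then sum the resulting bounds by means of Lemma \ref{Lemma 3.4}. Everything else is bookkeeping.
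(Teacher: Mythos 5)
Your overall framework is the paper's own: absolute convergence from Lemma \ref{Lemma 3.4}, the quantities $\nu(j)\defeq\sum_{h}\mu(h)/\deg{\L{hj}}$, the identities $\sum_{j}\nu(j)=1$ and $c_E=\sum_{j}\nu(j)/j$, and nonnegativity of $\nu$ as the crux --- this is precisely \eqref{(3.10)}--\eqref{(3.12)}. Your endgame is in fact a little cleaner than the paper's: the paper proves $c_E<1$ by a separate prime-counting argument showing the cyclicity constant $c_E^{*}=\nu(1)$ is $<1$, whereas you get $\nu(1)\le 1-1/\deg{\L{2}}$ directly from the rearrangement identity $\sum_{2\mid d}\nu(d)=1/\deg{\L{2}}$ together with $\nu\ge 0$, whence $c_E\le\tfrac12(1+\nu(1))\le 1-\tfrac{1}{2\deg{\L{2}}}$. (A minor slip: for non-CM curves the rearrangements should be justified by Proposition \ref{Proposition 3.2}(c), not (d), or by \eqref{(3.8)}.)

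The genuine gap is in how you reach $\nu(d)\ge 0$. You route it through the full density statement $\nu(d)=\lim_{X\to\infty}\pi(X)^{-1}\,|\{p\le X:d_p=d\}|$, which does require ruling out escape of mass to large $d_p$, and you assert this follows from sieve bounds as in Lemma \ref{Lemma 3.6} plus bookkeeping. Unconditionally, for non-CM curves, it does not: the unconditional Chebotarev input (Lemma \ref{Lemma 3.3}(a)(i)) evaluates $|\{p\le X:k\mid d_p\}|$ only for $k\ll_{N}(\log X)^{1/14}$, while the sieve bound available without CM (Lemma \ref{Lemma 3.6}(a), which already exploits both $p\equiv 1\bmod k$ and $k^{2}\mid|\Ep(\fp)|$) gives $|\{p\le X:d_p>Y\}|=o(\pi(X))$ only once $Y\gg X^{1/4}(\log X)^{1/2}$; the entire middle range is uncovered, and this is exactly why Theorem \ref{Theorem 1.1} assumes GRH and Theorem \ref{Theorem 1.3} is only an upper bound. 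Indeed, for $d=1$ your density claim is Serre's cyclicity theorem \eqref{(1.3)}, which for non-CM curves is known only under GRH; so what you call bookkeeping contains an open problem. (In the CM case your program can be completed via Lemma \ref{Lemma 3.6}(b), but Lemma \ref{Lemma 3.5} must hold unconditionally for all curves, since it feeds into Theorem \ref{Theorem 1.3}.) The repair --- and the paper's actual argument --- is to observe that nonnegativity needs only a one-sided, truncated inclusion-exclusion: sieving by the primes $q\le Y$ alone, with $Q=\prod_{q\le Y}q$ fixed, gives $0\le|\{p\le X:d_p=j\}|\le\sum_{h\mid Q}\mu(h)\,|\{p\le X:hj\mid d_p\}|$, a sum over finitely many fields of bounded modulus; apply Lemma \ref{Lemma 3.3}(a)(i) to each term, divide by $\li{X}$, let $X\to\infty$ first and only then $Y\to\infty$ using \eqref{(3.7)}. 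Escape of mass threatens only the matching lower bound, which is never needed; with $\nu(d)\ge0$ so obtained, your probabilistic endgame goes through verbatim.
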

\begin{proof}
Absolute convergence of the sum in \eqref{(1.1)} follows at once 
from \eqref{(3.7)}. To show that $c_E \in (0,1)$, first note that 
for every $j \ge 1$, 
$\sum_{h \ge 1}  \frac{\mu(h)}{\deg{\L{hj}}} \ll_E 1$ 
by \eqref{(3.7)}, because $\deg{\L{hj}} \le \deg{\L{h}}$. Next, we 
claim that
\begin{align}\label{(3.10)}
  \sum_{h \ge 1}  \frac{\mu(h)}{\deg{\L{hj}}} \ge 0 
   \qquad (j \ge 1).
\end{align}
To see this, fix any $j \ge 1$ and any 
$Y \ge j$. Let $Q = \prod_{q \le Y} q$ and let $X$ be large enough 
in terms of $Y$ and $N$ so that $\log X \ge c_1(QY)^{14}N^2$, 
where $c_1$ is the constant of Lemma \ref{Lemma 3.3}(a). Also 
assume that $\sum_{h \mid Q} |\mu(h)| = 2^{Y} \ll \log X.$ An 
inclusion-exclusion argument gives
\begin{align*}
\sums{p \le X}{\text{$p \nmid N$, $d_p = j$}} 1 \le 
\sum_{h \mid Q} \mu(h) 
\sums{p \le X}{\text{$p \nmid N$, $hj \mid d_p$}}  1.
\end{align*}
Applying Lemma \ref{Lemma 3.3}(a)(i), we obtain
\begin{align*}
  \sums{p \le X}{\text{$p \nmid N$, $d_p = j$}} 1 \le 
\li{X} 
\sum_{h \mid Q} \frac{\mu(h)}{\deg{\L{hj}}}
+ \BigO{2^YX\exp\br{-B(\log X)^{5/14}}}.
\end{align*}
Dividing by $\li{X}$ and letting $X$ tend to infinity, we find 
that
\begin{align*}
0 \le  \limsup_{X \to \infty} \frac{1}{\li{X}}
  \sums{p \le X}{\text{$p \nmid N$, $d_p = j$}} 1
\le 
\sum_{h \mid Q} \frac{\mu(h)}{\deg{\L{hj}}}.
\end{align*}
Letting $Y$ tend to infinity (cf.~\eqref{(3.7)}), we obtain 
\eqref{(3.10)}.

Now, for any $Y \ge 1$,
\begin{align*}
  \sum_{j \le Y} \sum_{h \ge 1} 
  \frac{\mu(h)}{\deg{\L{hj}}}
  & = \sum_{k \ge 1} \frac{1}{\deg{\L{k}}}
     \sums{hj = k}{j \le Y} \mu(h) 
   = \sum_{k \le Y} \frac{1}{\deg{\L{k}}} 
     \sum_{hj = k} \mu(h)
  + \BigO{\sum_{k > Y} \frac{1}{\deg{\L{k}}} 
     \sum_{hj = k} 1 }. 
\end{align*}
Since $\sum_{hj = k} \mu(h)$ vanishes unless $k = 1$, the main 
term here is just $1/\deg{\L{1}} = 1$. Applying \eqref{(3.8)} to 
the $\Oh$-term (note that 
$\sum_{hj=k} 1 = \tau(k) \le \sigma(k)\tau(k)/k$), then letting 
$Y$ tend to infinity, we obtain
\begin{align}\label{(3.11)}
  \sum_{j \ge 1} \sum_{h \ge 1} \frac{\mu(h)}{\deg{\L{hj}}} = 1.
\end{align}

Now since 
$\sum_{h \mid k} \mu(h) \cdot h = (-1)^{\omega(k)}\phi(\rad(k))$, 
we have
\begin{align}\label{(3.12)}
 c_E
=  \sum_{k \ge 1} \frac{1}{\deg{\L{k}}}
   \sum_{hj = k} \frac{\mu(h)}{j}
=  \sum_{j \ge 1} \frac{1}{j} 
       \sum_{h \ge 1} \frac{\mu(h)}{\deg{\L{hj}}},
\end{align}
convergence being assured by \eqref{(3.7)}, \eqref{(3.10)} and 
\eqref{(3.11)}. In view of \eqref{(3.10)}, \eqref{(3.11)} and 
\eqref{(3.12)}, we see that $0 < c_E \le 1$. In fact recalling 
that $c_E^{*} = \sum_{h \ge 1} \frac{\mu(h)}{\deg{\L{h}}}$ is 
the cyclicity constant, we can deduce from 
\eqref{(3.10)} --- \eqref{(3.12)} that 
$c_E \in (0,1] \cap [c_E^{*},\frac{1}{2}(c_E^{*} + 1)]$, with 
$c_E = 1$ if and only if $c_E^{*} = 1$. 
However, that $c_E^{*} < 1$ can be seen by considering 
$\{\text{$p \le X$ : $p \nmid N$ and $q > t$}\}$. By the 
Chebotarev density theorem (cf.~Lemma \ref{Lemma 3.3}(a)(i)) we 
have, for $t \ge 2$ and sufficiently large $X$,  
\[
\frac{1}{\pi(X)}\cdot 
 |\{p \le X : p \nmid N, q \mid d_p \Rightarrow q > t\}|
 \le 1 - \frac{1}{\pi(X)}\cdot 
     |\{\text{$p \le X$ : $p \nmid N$ and $2 \mid d_p$}\}|   
  < 1 - \frac{1}{2\deg{\L{2}}}. 
\]
On the other hand, using inclusion-exclusion, followed by 
Lemma \ref{Lemma 3.3}(a)(i) and \eqref{(3.7)}, one can show that 
\[
\frac{1}{\pi(X)}\cdot 
|\{p \le X : p \nmid N, q \mid d_p \Rightarrow q > t\}|
= c_E^{*}
 + \Oh_{E}\br{\frac{1}{t}}
  + \Oh_{E}\br{2^t\exp\br{-B(\log X)^{5/14}}}.
\]
For suitable $t = t(X)$ and sufficiently large $X$, comparing 
gives $c_E^{*} < 1$. 
\end{proof}

\begin{lemma}\label{Lemma 3.6}
(a) For $X,Y \ge 2$ we have 
\begin{align}\label{(3.13)}
  |\{\text{$p \le X$ : $p \nmid N$ and $d_p > Y$} \}| 
     \ll \frac{X^{3/2}}{Y^2} + X^{1/2}\log X.
\end{align}
(b) If $E$ has CM and $2 < Y \le \log X$, then
\begin{align}\label{(3.14)}
 |\{\text{$p \le X$ : $p \nmid N$ and $d_p > Y$} \}| 
   \ll \frac{X\log\log X}{Y\log X}.
\end{align}
\end{lemma}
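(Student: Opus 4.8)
The plan is to estimate, for each modulus $k$, the count $\abs{\{p\le X:p\nmid N,\ k\mid d_p\}}$, and to deduce both parts from a single reduction. Since $d_p\mid e_p$ we have $d_p^2\le\abs{\Ep(\fp)}\le(\sqrt X+1)^2$ for $p\le X$, so $d_p<2\sqrt X$; hence every $p$ with $d_p>Y$ contributes its own value $k=d_p\in(Y,2\sqrt X)$ as an admissible modulus, giving
\begin{align*}
 \abs{\{p\le X:p\nmid N,\ d_p>Y\}}
   \le\sum_{Y<k\le2\sqrt X}\abs{\{p\le X:p\nmid N,\ k\mid d_p\}}.
\end{align*}

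For part (a) the point is that $k\mid d_p$ imposes two nearly independent congruences. By Lemma \ref{Lemma 3.1} and Proposition \ref{Proposition 3.2}(a) it forces $p\equiv1\bmod k$, while $\Ep(\fp)\supseteq\Z/k\Z\times\Z/k\Z$ forces $k^2\mid\abs{\Ep(\fp)}=p+1-a_p$; together these give $a_p\equiv2\bmod k$ and $p\equiv a_p-1\bmod{k^2}$, and $\ab{a_p}\le2\sqrt X$ by Hasse. Splitting according to the value $a$ of $a_p$ and counting residues trivially,
\begin{align*}
 \abs{\{p\le X:p\nmid N,\ k\mid d_p\}}
  \le\sums{\ab{a}\le2\sqrt X}{a\equiv2\bmod k}
       \abs{\{p\le X:p\equiv a-1\bmod{k^2}\}}
   \ll\br{\frac{\sqrt X}{k}+1}\br{\frac{X}{k^2}+1}.
\end{align*}
Summing the expansion $X^{3/2}/k^3+\sqrt X/k+X/k^2+1$ over $Y<k\le2\sqrt X$, using $\sum_{k>Y}k^{-3}\ll Y^{-2}$ and $\sum_{k\le2\sqrt X}k^{-1}\ll\log X$, yields \eqref{(3.13)}, the term $\sqrt X\log X$ coming from $\sum_{Y<k\le2\sqrt X}\sqrt X/k$. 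The cyclotomic congruence $p\equiv1\bmod k$ is essential here: it cuts the number of admissible $a$ down to $\ll\sqrt X/k$, producing the saving $k^{-3}$ rather than $k^{-2}$.

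For part (b) this bound is useless when $Y\le\log X$, so we must use the CM structure; write $\OO$ for the CM order and $K$ for its fraction field, and split the modulus at $k_0=X^{2/5}$. For $d_p>k_0$ we invoke \eqref{(3.13)} directly, obtaining $\ll X^{3/2}/k_0^2+\sqrt X\log X\ll X^{7/10}$, which since $Y\le\log X$ is negligible against the target $X\log\log X/(Y\log X)\gg X/(\log X)^2$. For $Y<d_p\le k_0$ we sum the per-$k$ bound over $Y<k\le k_0$. First, primes $p$ inert in $K$ (supersingular, $a_p=0$, $\pi_p=\sqrt{-p}$) satisfy $k\mid\mathrm{Tr}(\pi_p-1)=-2$ whenever $k\mid d_p$, by Lemma \ref{Lemma 3.1}(3), so $d_p\le2<Y$ and such primes do not contribute; we may therefore restrict to $p$ split in $K$, for which $\pi_p$ generates a degree-one prime ideal of $\OO$ and $k\mid d_p$ is equivalent to the ray-class condition $\pi_p\equiv1\bmod{k\OO}$. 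The Brun--Titchmarsh inequality in the fixed field $K$, applied to prime ideals of norm $\le X$ in this class, then gives, uniformly for $k\le k_0$,
\begin{align*}
 \abs{\{p\le X:p\nmid N,\ k\mid d_p\}}
   \ll\frac{X}{\ab{(\OO/k\OO)^{\times}}\log(X/k^2)}
    \ll\frac{X}{\phi(k)^2\log X},
\end{align*}
since $\ab{(\OO/k\OO)^{\times}}\gg\phi(k)^2$ and $k^2\le X^{4/5}$ forces $\log(X/k^2)\gg\log X$. Summing via $\sum_{k>Y}\phi(k)^{-2}\ll1/Y$ (cf.\ \eqref{(3.9)}) gives $\ll X/(Y\log X)$, which implies \eqref{(3.14)}.

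The main obstacle is the uniform Brun--Titchmarsh bound in the last display, where the primality saving $1/\log X$ is indispensable: a purely elementary lattice-point count of the $\gamma\in\OO$ with $N(1+k\gamma)\le X$ only gives $\ll X/k^2+1$, hence $\ll X/Y$, which is a factor $\log X/\log\log X$ too large. Thus the argument genuinely requires a sieve (Brun--Titchmarsh, or the large sieve) for the fixed number field $K$ with uniformity in the modulus $k\OO$ up to norm $k_0^2=X^{4/5}=X^{1-1/5}$, together with the correct translation of $k\mid d_p$ into $\pi_p\equiv1\bmod{k\OO}$ (handling the units of $\OO$ and the $\BigO{1}$ primes dividing the conductor or dividing $k$). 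The reduction discarding supersingular primes and the splitting at $k_0$ are the routine ingredients that localise the whole difficulty in this one uniform sieve estimate.
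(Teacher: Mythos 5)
Your part (a) is the paper's own argument in all but bookkeeping: both convert $k \mid d_p$ into the two conditions $a_p \equiv 2 \bmod k$ and $p \equiv a_p - 1 \bmod{k^2}$, count residue classes trivially, and sum the same four terms over $Y < k \le 2\sqrt{X}$; the paper merely splits off the term $a = 2$ instead of carrying the factor $\br{\sqrt{X}/k + 1}$.

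Part (b) is correct in structure but takes a genuinely different route, and arguably a more robust one. Both proofs rest on the same arithmetic inputs: since $Y > 2$, any modulus $k > Y$ has $k \ge 3$, so $k \mid d_p$ forces $a_p \ne 0$ and then, by Lemma \ref{Lemma 3.1}, $\frac{\pi_p - 1}{k} \in \OO_K$ --- note this is the maximal order $\OO_K$, not the CM order $\OO$, a small correction to your write-up --- so that one must count primes $p = N(1 + k\gamma) \le X$, $\gamma \in \OO_K$, i.e.\ bound $\ab{S_j(X;D,k)}$. The paper covers $\{p : d_p > Y\}$ by three sets according to the prime divisors of $d_p$: a prime divisor $q > \log X$ (trivial lattice count), a prime divisor $q \in (Y, \log X]$, or $d_p$ being $Y$-smooth and hence having a divisor $k \in [Y, Y^2]$; the latter two cases use Cojocaru's sieve bound \cite[Lemma 2.5]{C2003}, applied only to moduli $k \le (\log X)^2$. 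You instead split at $k_0 = X^{2/5}$, dispose of $d_p > k_0$ with part (a) (giving $\BigO{X^{7/10}}$), and sum a sieve bound over \emph{all} integers $k \in (Y, k_0]$. Your one questionable step is quoting a Brun--Titchmarsh inequality for $K$ in the clean form $\ll X/(\phi(k)^2 \log(X/k^2))$: that exact uniform statement is not something you can point to off the shelf, but it is also more than you need. Cojocaru's Lemma 2.5 --- the very input the paper uses --- is valid for all $k < \sqrt{X} - 1$, and for $k \le X^{2/5}$ its logarithm factor satisfies $\log\br{\frac{\sqrt{X}-1}{k}} \gg \log X$, so it gives $\ab{S_j(X;D,k)} \ll \frac{X \log\log X}{k^2 \log X} + \frac{\sqrt{X} \log\log X}{k \log X}$; summing over $Y < k \le X^{2/5}$ yields $\ll \frac{X\log\log X}{Y \log X} + \sqrt{X}\log\log X$, which is exactly \eqref{(3.14)}. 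You should cite that lemma rather than assert an unreferenced Brun--Titchmarsh bound. Finally, here is what your route buys: the paper's trivial count of its first set is of size $\asymp X/((\log X)\log\log X)$, which exceeds the right-hand side of \eqref{(3.14)} as soon as $Y \gg (\log\log X)^2$, so the paper's proof as written covers only part of the stated range $2 < Y \le \log X$ (a repairable slip --- raise the threshold $\log X$ to $(\log X)^2$); your treatment of the large moduli through part (a) delivers \eqref{(3.14)} in the full range.
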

\begin{proof}
(a) First of all note that since $d_p \mid e_p$, we have 
$d_p^2 \le |\Ep(\fp)| \le (\sqrt{p} + 1)^2$ by Hasse's inequality.
Thus, if $p \nmid N$ and $d_p > Y$ then $d_p = k$ for some 
$Y < k \le 2\sqrt{X}$. But $d_p = k$ implies 
$k^2 \mid (p + 1 - a_p)$, and also $k \mid p - 1$ by 
Lemma \ref{Lemma 3.1} and Proposition \ref{Proposition 3.2}(a); 
hence $k \mid a_p - 2$. Since $a_p \le 2\sqrt{p}$ by Hasse's 
inequality, we therefore have
\begin{align*}
 \sums{p \le X}{\text{$p \nmid N$, $d_p > Y$}} 1
 & \le \sum_{Y < k \le 2\sqrt{X}} 
   \sumss{|a| \le 2\sqrt{X}}{a \ne 2}{a \equiv 2 \bmod k}  
     \hspace{5pt} 
       \sumss{p \le X}{a_p = a}{k^2 \mid p + 1 - a} 1
   + \sum_{Y < k \le 2\sqrt{X}} 
       \sumss{p \le X}{a_p = 2}{k^2 \mid p - 1} 1 \\
 & \ll \sum_{Y < k \le 2\sqrt{X}} 
         \frac{\sqrt{X}}{k}\br{\frac{X}{k^2} + 1} 
       + \sum_{Y < k \le 2\sqrt{X}} \br{\frac{X}{k^2} + 1} \\
 & \ll \frac{X^{3/2}}{Y^2} 
         + \sqrt{X}\log X 
           + \frac{X}{Y} 
             + \sqrt{X}
\ll  \frac{X^{3/2}}{Y^2} 
         + \sqrt{X}\log X .
\end{align*}
(Here we have used the elementary bound 
$\sum_{k > Y} k^{-m} \ll Y^{1-m}$, $m \ge 2$.)

(b) Suppose $E$ has complex multiplication by an order in the 
imaginary quadratic field $K = \Q(\sqrt{-D})$, $D$ a squarefree 
positive integer. We begin with the following observation. Since 
$p \nmid d_p$, the statement `$p \nmid N$ and $k \mid d_p$' is 
equivalent to the statement `$p \nmid kN$ and $p$ splits 
completely in $\L{k}$' (Lemma \ref{Lemma 3.1}). In that case 
$p \equiv 1 \bmod k$ by Proposition \ref{Proposition 3.2}(a). If 
$a_p = 0$, then we also have $p \equiv -1 \bmod k$, because 
$k^2 \mid d_pe_p = (p + 1 - a_p)$. But we can only have both
$p \equiv  -1 \bmod k$ and $p \equiv 1 \bmod k$ if $k = 1$ or
$2$. Therefore, we  
necessarily have $a_p \ne 0$ when $k \mid d_p$ and $k \ge 3$. 
Moreover, $\frac{\pi_p - 1}{k}$ is an algebraic integer 
(Lemma \ref{Lemma 3.1}), and since $\Q(\pi_p) = K$ when 
$a_p \ne 0$ (see \cite[Lemma 2.3]{C2003}), we have that
$\frac{\pi_p - 1}{k} \in \OO_K$.

Now, 
\begin{align}\label{(3.15)}
\{\text{$p \le X$ : $p \nmid N$ and $d_p > Y$} \} 
  \subseteq \mathscr{P}_1(X) 
            \cup \mathscr{P}_2(X,Y) 
            \cup \mathscr{P}_3(Y),
\end{align}
where
\begin{align*}
\mathscr{P}_1(X)   
 & \defeq 
  \{ \text{$p \le X$ : $p \nmid N$ and $q \mid d_p$ 
           for some $q \in (\log X, 2\sqrt{X}]$} \}, \\ 
\mathscr{P}_2(X,Y) 
 & \defeq 
  \{ \text{$p \le X$ : $p \nmid N$ and $q \mid d_p$ 
           for some $q \in (Y, \log X]$} \}, \\
\mathscr{P}_3(Y)   
 & \defeq 
  \{ \text{$p \le X$ : $p \nmid N$, $d_p > Y$, and 
           $q \mid d_p \Rightarrow q \le Y$} \} \\
 &\phantom{:} \subseteq  
  \{\text{$p \le X$ : $p \nmid N$, $k \mid d_p$ 
          for some $k \in [Y,Y^2]$}\}.
\end{align*}
Since $2 < Y \le \log X$ we have, by our initial observation, that 
\begin{align}\label{(3.16)}
 |\mathscr{P}_1(X)|  
= 
 \sum_{\log X < q \le 2\sqrt{X}}
 \sumss{p \le X}{\text{$p \nmid N$, $q \mid d_p$}}{a_p \ne 0} 1 
\le 
 \sum_{\log X < q \le 2\sqrt{X}}
 \sums{p \le X}{\frac{\pi_p - 1}{k} \in \OO_K} 1.
\end{align}
Since $\pi_p$ has norm $p$ in $K/\Q$, it follows that
\begin{align}\label{(3.17)}
|\mathscr{P}_1(X)|  
\le \sum_{\log X < q \le 2\sqrt{X}} |S_j(X;D,q)|,
\end{align}
with
\begin{align*}
S_j(X;D,k) 
  & \defeq 
    \Br{\text{$p \le X : p = \br{\textstyle\frac{u}{j}k+1}^2 
      + D\br{\textstyle\frac{v}{j}}^2k^2$ for some $u,v \in \Z$}},
\end{align*}
and $j = 1$ if $-D \equiv 2,3 \bmod 4$; $j = 2$ if 
$-D \equiv 1 \bmod 4$. A trivial bound for $|S_i(X;D,q)|$ will 
suffice here:
\begin{align}\label{(3.18)}
\begin{split}
\sum_{\log X < q \le 2\sqrt{X}} |S_j(X;D,q)| 
 & \ll  \sum_{\log X < q \le 2\sqrt{X}} 
        \frac{\sqrt{X}}{q\sqrt{D}}\br{\frac{\sqrt{X}}{q} + 1} \\
 & \ll X\sum_{q > \log X}  \frac{1}{q^2} 
    + \sqrt{X}\sum_{q \le 2\sqrt{X}} \frac{1}{q} \\
 & \ll \frac{X}{(\log X)(\log\log X)} + \sqrt{X}\log\log X.
\end{split}
\end{align}
(To obtain the last bound, apply partial summation and the 
prime number theorem to each sum.)

We bound $|\mathscr{P}_2(X,Y)|$ and $|\mathscr{P}_3(Y)|$
similarly, but we need the following non-trivial bound
\cite[Lemma 2.5]{C2003}:
\begin{align*}
|S_j(X;D,k)|
\ll 
\br{\frac{\sqrt{X}}{k} + 1}
\frac{\sqrt{X}\log\log X}{k\sqrt{D}
\log\br{\frac{\sqrt{X}-1}{k}}},
\end{align*}  
provided $k < \sqrt{X} - 1$. Thus,
\begin{align}\label{(3.19)}
\begin{split} 
 |\mathscr{P}_2(X,Y)|  
 & \le \sum_{Y < q \le \log X} |S_j(X;D,q)| \\
 & \ll \sum_{Y < q \le \log X} 
        \br{\frac{\sqrt{X}}{q} + 1}
            \frac{\sqrt{X}\log\log X}{q\sqrt{D}
            \log\br{\frac{\sqrt{X}-1}{q}}} \\
 & \ll \frac{X\log\log X}{\log\br{\frac{\sqrt{X}-1}{\log X}}}
       \sum_{q > Y}\frac{1}{q^2}
     + \frac{\sqrt{X}\log\log X}
            {\log\br{\frac{\sqrt{X}-1}{\log X}}}
       \sum_{q \le \log X}\frac{1}{q} \\
 & \ll \frac{X\log\log X}{(\log X)Y\log Y},
\end{split} 
\end{align}
and
\begin{align}\label{(3.20)}
\begin{split}
|\mathscr{P}_3(Y)| 
 & \le \sum_{Y < k \le Y^2} |S_i(X;D,k)| \\
 & \ll \sum_{Y \le k \le Y^2} 
        \br{\frac{\sqrt{X}}{k} + 1}
        \frac{\sqrt{X}\log\log X}
             {k\sqrt{D}\log\br{\frac{\sqrt{X}-1}{k}}} \\
 & \ll \frac{X\log\log X}{\log\br{\frac{\sqrt{X}-1}{Y}}}
        \sum_{Y \le k \le Y^2}\frac{1}{k^2}
     + \frac{\sqrt{X}\log\log X}{\log\br{\frac{\sqrt{X}-1}{Y}}}
        \sum_{Y \le k \le Y^2}\frac{1}{k} \\
 & \ll \frac{X\log\log X}{(\log X)Y} 
     + \frac{\sqrt{X}(\log\log X)(\log Y)}{\log X}.
 \end{split}
\end{align}
Since $Y \le \log X$, putting \eqref{(3.16)} --- \eqref{(3.20)}
into \eqref{(3.15)}, we obtain \eqref{(3.14)}.
\end{proof}

\section{Proof of Theorem 1.1}\label{Section 4}

Let $X \ge 2$ and set 
\begin{align*}
 Y = Y(X) \defeq \frac{X^{1/5}}{(\log X)^{2/5}}.
\end{align*}
We proceed on GRH, so that partial summation applied to 
\eqref{(3.2)} gives
\begin{align}\label{(4.1)}
 \begin{split}
 \sums{p \le X}{\text{$p \nmid N$, $k \mid d_p$}} p 
& = X \sums{p \le X}{\text{$p \nmid N$, $k \mid d_p$}} 1 
    - \int_{2}^{X} \Big(\textstyle
   \sums{p \le t}{\text{$p \nmid N$,$k \mid d_p$}} 1\Big)\dd{t} \\
& = \frac{X\li{X}}{\deg{\L{k}}}  
    - \frac{1}{\deg{\L{k}}}\int_{2}^{X} \li{t} \dd{t}   
    + \BigO{ X^{3/2}\log(XN)} \\ 
& = \frac{\li{X^2}}{\deg{\L{k}}} + \BigO{ X^{3/2}\log(XN)}.
 \end{split}
\end{align}
Here we have used that 
$\int_2^X \li{t} \dd{t} = X\li{X} - \li{X^2} + \BigO{1}$. 

Recall that
$\expep = |\Ep(\fp)|/d_p = (p + 1 - a_p)/d_p$
if $p \nmid N$, and by definition $\expep = 0$ otherwise, hence
\begin{align}\label{(4.2)}
 \sum_{p \le X} \expep 
 = \sums{p \le X}{p \nmid N} \frac{p}{d_p} 
   + \sums{p \le X}{p \nmid N} \frac{1 - a_p}{d_p} 
       \eqdef \mathcal{T}_0 + \mathcal{E}_0.
\end{align}
Since $\ab{a_p} \le 2\sqrt{p}$ by Hasse's inequality, we have
\begin{align}\label{(4.3)}
\mathcal{E}_0 
  \defeq \sums{p \le X}{p \nmid N} \frac{1 - a_p}{d_p} 
    \ll \sum_{p \le X} \sqrt{p} 
      \le X^{1/2}\sum_{p \le X} 1 
        \le X^{3/2}.
\end{align}
We write
\begin{align*}
\mathcal{T}_0 
  \defeq \sums{p \le X}{p \nmid N} \frac{p}{d_p} 
  = \sums{p \le X}{\text{$p \nmid N$, $d_p \le Y$}} \frac{p}{d_p} 
    + \sums{p \le X}{\text{$p \nmid N$, $d_p > Y$}} \frac{p}{d_p} 
        \eqdef \mathcal{T}_{1} + \mathcal{E}_{1}.
\end{align*}
To bound $\mathcal{E}_1$, we apply \eqref{(3.13)}, and then use 
the definition of $Y$:
\begin{align}\label{(4.4)}
\mathcal{E}_{1} 
 \defeq 
   \sums{p \le X}{\text{$p \nmid N$, $d_p > Y$}} \frac{p}{d_p} 
     \le \frac{X}{Y} 
           \sums{p \le X}{\text{$p \nmid N$, $d_p > Y$}} 1 
       \ll \frac{X^{5/2}}{Y^3} + \frac{X^{3/2}\log X}{Y} 
         \ll X^{19/10}(\log X)^{6/5}.
\end{align}
We partition $\mathcal{T}_1$, using the identity 
$\frac{1}{k} = \sum_{hj \mid k} \frac{\mu(h)}{j}$, as follows:
\begin{align*}
\mathcal{T}_{1} 
 \defeq \sums{p \le X}{\text{$p \nmid N$, $d_p \le Y$}} p 
          \sum_{hj \mid d_p} \frac{\mu(h)}{j} 
 = \sums{p \le X}{p \nmid N} p 
     \sums{hj \mid d_p}{hj \le Y} \frac{\mu(h)}{j} 
    - \sums{p \le X}{\text{$p \nmid N$, $d_p > Y$}} p 
       \sums{hj \mid d_p}{hj \le Y} \frac{\mu(h)}{j} 
 \eqdef \mathcal{T}_{11} - \mathcal{E}_{11}.
\end{align*}

We now consider $\mathcal{E}_{11}$, making yet another partition:
\begin{align*}
 \mathcal{E}_{11} 
  \defeq  \sumss{p \le X}{p \nmid N}{d_p > Y^{2}} p 
           \sums{hj \mid d_p}{hj \le Y} \frac{\mu(h)}{j} 
         + \sumss{p \le X}{p \nmid N}{Y < d_p \le Y^{2}} p 
            \sums{hj \mid d_p}{hj \le Y} \frac{\mu(h)}{j} 
 \eqdef \mathcal{E}_{12} + \mathcal{E}_{13}.
\end{align*}
Next, we note that
\begin{align*}
 \sums{hj \mid d_p}{hj \le Y} \frac{\mu(h)}{j} 
  \ll \sum_{j \le Y} \frac{1}{j}  \sum_{h \le Y/j} 1 
    \le Y\sum_{j \le Y} \frac{1}{j^2} 
      \ll Y.
\end{align*}
Thus, by \eqref{(3.13)} (with $Y^{2}$ in place of $Y$), and by 
the definition of $Y$, we have
\begin{align}\label{(4.5)}
  \begin{split}
 \mathcal{E}_{12}   
   & \defeq \sumss{p \le X}{p \nmid N}{d_p > Y^{2}} p 
            \sums{hj \mid d_p}{hj \le Y} \frac{\mu(h)}{j} 
       \ll  XY \sumss{p \le X}{p \nmid N}{d_p > Y^{2}} 1 \\
   & \hspace{140pt} 
       \ll \frac{X^{5/2}}{Y^{3}} + X^{3/2}Y\log X 
        \ll X^{19/10}(\log X)^{6/5}.
 \end{split}          
\end{align}

For $\mathcal{E}_{13}$, we use
\begin{align*}
 \sums{hj \mid d_p}{hj \le Y} \frac{\mu(h)}{j} 
  \ll \sum_{h \mid d_p} 1  \sum_{j \mid d_p}  \frac{1}{j} 
 = \frac{\tau(d_p)}{d_p} \sum_{j \mid d_p} \frac{d_p}{j} 
 = \frac{\tau(d_p)\sigma(d_p)}{d_p}.
\end{align*}
Thus,
\begin{align*}
\mathcal{E}_{13}  
& \defeq 
 \sumss{p \le X}{p \nmid N}{Y < d_p \le Y^{2}} p 
   \sums{hj \mid d_p}{hj \le Y} \frac{\mu(h)}{j} \\
 & \hspace{50pt}
  \ll  \sumss{p \le X}{p \nmid N}{Y < d_p \le Y^{2}} p 
                 \cdot \frac{\tau(d_p)\sigma(d_p)}{d_p} 
   \le \sum_{Y < k \le Y^{2}} \frac{\tau(k)\sigma(k)}{k} 
        \sums{p \le X}{\text{$p \nmid N$, $k \mid d_p$}} p.
\end{align*}
We apply \eqref{(4.1)} to the last sum, noting that 
$\li{X^2} \ll X^2/\log X$. Then we use \eqref{(3.8)}, as well as 
the bound
$\sum_{k \le Y^{2}} \frac{\tau(k)\sigma(k)}{k} 
   \ll \int_2^{Y^{2}} \log t \dd{t} \ll Y^{2} \log Y$.
(Apply partial summation to 
$\sum_{k \le t} \tau(k)\sigma(k) \ll t^2\log t$.) Thus,
\begin{align*}
 & \sum_{Y < k \le Y^{2}} \frac{\tau(k)\sigma(k)}{k} 
 \sums{p \le X}{\text{$p \nmid N$, $k \mid d_p$}} p \\
 & \hspace{50pt}  
    \ll \frac{X^2}{\log X} \sum_{k > Y} 
        \frac{\tau(k)\sigma(k)}{k\deg{\L{k}}} 
      + X^{3/2} \log(XN) 
         \sum_{k \le Y^{2}} \frac{\tau(k)\sigma(k)}{k} \\
 & \hspace{50pt} 
    \ll 
   \begin{cases}
     \frac{X^2 \log Y}{Y\log X} + X^{3/2}(\log(XN))Y^{2}\log Y
    & \text{if $E$ has CM,} \\
     \frac{X^2}{\log X}\cdot \frac{B_E \log Y}{Y^3} 
        + X^{3/2}(\log(XN))Y^{2}\log Y.
    & \text{if $E$ is a non-CM curve.}
   \end{cases}
\end{align*}
Combining and using the definition of $Y$, we obtain
\begin{align}\label{(4.6)}
 \mathcal{E}_{13} 
  \ll  
   \begin{cases}
     X^{19/10}(\log(XN))^{6/5} 
    & \text{if $E$ has CM,} \\
     B_EX^{7/5}(\log X)^{6/5}
       + X^{19/10}(\log(XN))^{6/5}
    & \text{if $E$ is a non-CM curve.}
   \end{cases}
\end{align}

Finally we consider $\mathcal{T}_{11}$. By \eqref{(4.1)}, and 
since 
\begin{align*}
 \abs{\sum_{hj \le Y} \frac{\mu(h)}{j}} 
  = \abs{\sum_{k \le Y} \sum_{hj = k}\frac{\mu{(h)}}{j} }  
     \le \sum_{k \le Y}\abs{ \sum_{hj = k}\frac{\mu{(h)}}{j}} 
      \le \sum_{k \le Y} \frac{\phi(k)}{k} \le Y,
\end{align*}
we have
\begin{align}\label{(4.7)}
\begin{split}
\mathcal{T}_{11} 
& \defeq 
 \sums{p \le X}{p \nmid N} p 
  \sums{hj \mid d_p}{hj \le Y} \frac{\mu(h)}{j} 
= \sum_{hj \le Y} \frac{\mu(h)}{j} 
   \sums{p \le X}{\text{$p \nmid N$, $hj \mid d_p$}} p \\ 
& \phantom{:}= \li{X^2} \sum_{hj \le Y} \frac{\mu(h)}{j} 
            \cdot \frac{1}{\deg{\L{hj}}}  
     + \BigO{X^{3/2}Y \log(XN)}.
\end{split}
\end{align}
Now, for prime powers $q^m$ we have
\begin{align*}
 \sum_{hj = q^m} \frac{\mu(h)}{j} 
   = \frac{1}{q^m} - \frac{1}{q^{m-1}} = \frac{1-q}{q^m},
\end{align*}
and so by multiplicativity we have
\begin{align*}
\sum_{hj = k} \frac{\mu(h)}{j} 
= \prod_{q^m \| k}\frac{1-q}{q^m}
= \frac{(-1)^{\omega(k)}}{k}\prod_{q \mid k}(q-1)
= \frac{(-1)^{\omega(k)}\phi(\rad(k))}{k}.
\end{align*}
(Here, $q^m \| k$ means that $q^m \mid k$ but $q^{m+1} \nmid k$.) 
Therefore, setting $c_E$ as in \eqref{(1.1)}, and using 
\eqref{(3.7)}, we obtain
\begin{align*}
 \sum_{hj \le Y} \frac{\mu(h)}{j}\cdot \frac{1}{\deg{\L{hj}}} 
 & = \sum_{k = 1}^{\infty} 
      \frac{(-1)^{\omega(k)}\phi(\rad(k))}{k\deg{\L{k}}} 
    + \BigO{\sum_{k > Y} \frac{1}{\deg{\L{k}}}} \\
 & = 
  c_E +
   \begin{cases}
     \BigO{1/Y} 
        &  \text{if $E$ has CM,} \\
     \BigO{B_E/Y^3}  
        &  \text{if $E$ is a non-CM curve.}
   \end{cases}
\end{align*}
Putting this into \eqref{(4.7)}, then using the definition of $Y$, 
we obtain
\begin{align}\label{(4.8)}
\begin{split}
 \mathcal{T}_{11} 
& = c_E \cdot \li{X^2} \\
& \hspace{30pt} + 
  \begin{cases}
    \BigO{\frac{X^{9/5}}{(\log X)^{3/5}}}  
      + \BigO{X^{17/10}(\log (XN))^{3/5}} 
       & \text{if $E$ has CM,} \\
    \BigO{B_EX^{7/5}(\log X)^{1/5}}
      + \BigO{X^{17/10}(\log (XN))^{3/5}}   
       & \text{if $E$ is a non-CM curve.}
  \end{cases}
\end{split}
\end{align}

Gathering the estimates \eqref{(4.8)}, \eqref{(4.6)}, 
\eqref{(4.5)}, \eqref{(4.4)}, \eqref{(4.3)}, and 
\eqref{(4.2)}, the largest error term being of size
$\BigO{X^{19/10}(\log(XN))^{6/5}}$, we obtain
\begin{align*}
 \sum_{p \le X} \expep 
  = \mathcal{T}_{11} 
      - (\mathcal{E}_{12} 
         + \mathcal{E}_{13}) 
      + \mathcal{E}_1 
      + \mathcal{E}_0 
  = c_E \cdot \li{X^2}  + \BigO{X^{19/10}(\log(XN))^{6/5}},
\end{align*}
plus $\BigO{B_EX^{7/5}(\log X)^{6/5}}$ if $E$ is a non-CM curve, 
this term being dominated by the other $\Oh$-term once 
$X \ge B_E^2$. \qed 

\section{Proof of Theorem 1.2}\label{Section 5}

We now fix an elliptic curve $E$ defined over $\Q$, of conductor 
$N$. We suppose that $E$ has complex multiplication by an order in 
the imaginary quadratic field $K = \Q(\sqrt{-D})$. We know that 
there are only nine possibilities for $D$, namely 
$D \in \{1,2,3,7,11,19,43,67,163\}$. (See 
\cite[Appendix C, \S 11]{S1986}.) 

Let $c_1$ be the absolute positive constant of 
Lemma \ref{Lemma 3.3}(a), and let $c_0, c_2$ be positive 
constants to be specified presently. Suppose 
$X \ge \exp\br{c_0N^2}$ and set
\begin{align*}
Y = Y(X,N) 
  \defeq c_2\br{\frac{\log X}{N^2}}^{\frac{1}{24}}.
\end{align*}
We choose $c_0$ and $c_2$ so that $2 < Y \le (\log X)^{1/3}$ and
$2c_1Y^{24}N^2 \le \log X$.

Similarly to \eqref{(4.1)},
partial summation and Lemma \ref{Lemma 3.3}(a)(ii) give
\begin{align}\label{(5.1)}
\sums{p \le X}{\text{$p \nmid N$, $k \mid d_p$}} p
= \frac{\li{X^2}}{\deg{\L{k}}} 
+ \BigO{X^2\exp\br{-B(\log X)^{3/8}}},
\end{align}
provided $c_1k^{8}N^2 \le \log X$. One of the error terms 
involved is
\begin{align*}
\int_2^X 
  \Big(\textstyle
        \sums{p \le t}{\text{$p \nmid N$, $k \mid d_p$}} 1 
       - \frac{\li{t}}{\deg{\L{k}}} \Big) \dd{t}.
\end{align*}
We can apply \eqref{(5.1)} to 
$\int_{\exp(c_1k^{8}N^2)}^X(\cdots) \dd{t}$,
but we can only apply a trivial bound to the rest of the integral:
\begin{align*}
\int_2^{\exp(c_1k^{8}N^2)}
  \Big(\textstyle
        \sums{p \le t}{\text{$p \nmid N$, $k \mid d_p$}} 1 
       - \frac{\li{t}}{\deg{\L{k}}} \Big) \dd{t} 
  \ll \displaystyle\int_2^{\exp(c_1k^{8}N^2)} t \dd{t} 
   \ll \exp\br{2c_1k^{8}N^2}.
\end{align*}
This is $\BigO{X}$ if $k \le Y^3$, because 
$2c_1Y^{24}N^2 \le \log X$. 

We now proceed as in the proof of Theorem \ref{Theorem 1.1}. 
The differences are as follows. Analogous with 
\eqref{(4.4)} and \eqref{(4.5)} we have, 
by \eqref{(3.14)}, that
\begin{align}\label{(5.2)}
\mathcal{E}_{1} 
\defeq \sums{p \le X}{\text{$p \nmid N$, $d_p > Y$}} \frac{p}{d_p} 
\le \frac{X}{Y} \sums{p \le X}{\text{$p \nmid N$, $d_p > Y$}} 1 
\ll \frac{X^2}{\log X}\cdot \frac{\log\log X}{Y^2},
\end{align}
and
\begin{align}\label{(5.3)}
\mathcal{E}_{12}  
\defeq  
\sumss{p \le X}{p \nmid N}{d_p > Y^{3}} p 
\sums{hj \mid d_p}{hj \le Y} \frac{\mu(h)}{j} 
\ll  
XY \sumss{p \le X}{p \nmid N}{d_p > Y^{3}} 1 
\ll 
\frac{X^2}{\log X}\cdot \frac{\log\log X}{Y^2}.
\end{align}

Analogous with \eqref{(4.6)} we have
\begin{align*}
\mathcal{E}_{13} 
\defeq 
\sumss{p \le X}{p \nmid N}{Y < d_p \le Y^{3}} p 
\sums{hj \mid d_p}{hj \le Y} \frac{\mu(h)}{j} 
\le 
\sum_{Y < k \le Y^{3}} \frac{\tau(k)\sigma(k)}{k} 
\sums{p \le X}{\text{$p \nmid N$, $k \mid d_p$}} p.
\end{align*}
Since \eqref{(5.1)} holds uniformly for $k \le Y^3$, we may apply 
it to the last sum to obtain
\begin{align*}
\mathcal{E}_{13}       
  & \ll 
     \frac{X^2}{\log X} 
      \sum_{k > Y} \frac{\tau(k)\sigma(k)}{k\deg{\L{k}}} 
     + X^2\exp\br{-B(\log X)^{3/8}} 
        \sum_{k \le Y^{3}} \frac{\tau(k)\sigma(k)}{k}.
\end{align*}
We use \eqref{(3.8)} to bound the second last sum 
and an elementary bound for the last sum. Thus, we have
\begin{align}\label{(5.4)}
\mathcal{E}_{13} 
\ll 
  \frac{X^2}{\log X}\cdot \frac{\log Y}{Y} 
   + X^2\exp\br{-B(\log X)^{3/8}}\cdot Y^{3}\log Y
\ll 
  \frac{X^2}{\log X}\cdot \frac{\log Y}{Y}.
\end{align}

Analogous with \eqref{(4.8)}, we have, by \eqref{(5.1)},
\begin{align}\label{(5.5)}
\begin{split}
 \mathcal{T}_{11} 
 & \defeq 
    \sums{p \le X}{p \nmid N} p 
     \sums{hj \mid d_p}{hj \le Y} \frac{\mu(h)}{j} 
 = 
   \li{X^2} \br{c_E +  \BigO{\frac{1}{Y}}} 
    + YX^2\exp\br{-B(\log X)^{3/8}} \\
& \phantom{:}= 
  c_E \cdot \li{X^2} + \BigO{\frac{X^2}{Y\log X}}.
\end{split}
\end{align}
Gathering the estimates \eqref{(5.5)}, \eqref{(5.4)}, 
\eqref{(5.3)}, \eqref{(5.2)}, and \eqref{(4.2)},  we obtain
\begin{align*}
 \sum_{p \le X} \expep 
  & = 
   \mathcal{T}_{11} 
  - (\mathcal{E}_{12} 
      + \mathcal{E}_{13}) 
  + \mathcal{E}_1 
  + \mathcal{E}_0 \\
  & = 
 c_E \cdot \li{X^2}  
    + \BigO{\frac{X^2}{\log X} \br{\frac{\log\log X}{Y^2} 
      + \frac{\log Y}{Y}} }.
\end{align*}
Since $Y = (N^{-2}\log X)^{1/24}$, the theorem follows. \qed

\section{Proof of Theorem 1.3}\label{Section 6}

Let $E$ be an elliptic curve defined over $\Q$, of conductor $N$. 
We make no assumptions as to whether or not $E$ has CM.
Let $c_1$ be the absolute positive constant of 
Lemma \ref{Lemma 3.3}(a), and let $c_0, c_2$ be positive constants 
to be specified presently. Suppose $X \ge \exp\br{c_0N^2}$ and set
\begin{align*}
Y = Y(X,N) 
  \defeq c_2\log\left[ \frac{\log X}{N^2} \right]^{\frac{1}{42}}.
\end{align*}
We choose $c_0$ and $c_2$ so that 
$2c_1\exp\br{42Y}N^2 \le \log X$.

As in the proofs of the first two theorems, we have
\begin{align}\label{(6.1)}
\sum_{p \le X} \expep 
 =  \sums{p \le X}{\text{$p \nmid N$, $d_p \le Y$}} \frac{p}{d_p} 
   + \sums{p \le X}{\text{$p \nmid N$, $d_p > Y$}} \frac{p}{d_p}
     + \sums{p \le X}{p \nmid N} \frac{1 - a_p}{d_p} 
       \eqdef \mathcal{T}_1 + \mathcal{E}_{1} + \BigO{X^{3/2}}.
\end{align}
Since $d_p \le 2\sqrt{p}$, and since $p \equiv 1 \bmod j$ if 
$d_p = j$ by Proposition \ref{Proposition 3.2}(a), we have
\begin{align*}
  \sums{p \le X}{\text{$p \nmid N$, $d_p > Y$}} \frac{1}{d_p}
     = \sum_{Y < j \le 2\sqrt{X}} \frac{1}{k} 
        \sums{p \le X}{\text{$p \nmid N$, $d_p = j$}} 1
     \le \sum_{Y < j \le 2\sqrt{X}} \frac{1}{j} 
           \sums{p \le X}{p \equiv 1 \bmod j} 1. 
\end{align*}     
By the Brun-Titchmarsh inequality \cite[Theorem 3.7]{HR1974},
\begin{align*}
\sum_{Y < j \le 2\sqrt{X}} \frac{1}{j} 
  \sums{p \le X}{p \equiv 1 \bmod j} 1 
 \ll \sum_{Y < j \le 2\sqrt{X}} 
       \frac{1}{j}\cdot \frac{X}{\phi(j)\log(X/j)} 
 \ll \frac{X}{\log X}\sum_{j > Y} \frac{1}{j\phi(j)} 
 \ll \frac{X}{Y\log X}
\end{align*}
(cf.~\eqref{(3.9)} for the last bound). Hence
\begin{align}\label{(6.2)}
\mathcal{E}_1 
  \defeq 
    \sums{p \le X}{\text{$p \nmid N$, $d_p > Y$}} \frac{p}{d_p}
  \le 
    X \sums{p \le X}{\text{$p \nmid N$, $d_p > Y$}} \frac{1}{d_p}
  \ll \frac{X^2}{Y\log X}.
\end{align}

We claim that, with $c_E$ as in \eqref{(1.1)} and $B_E$ as in 
Proposition \ref{Proposition 3.2}(c), we have
\begin{align}\label{(6.3)}
\mathcal{T}_1 
  & \le 
      \begin{cases}
      c_E \cdot \li{X^2} + \BigO{\frac{X^2}{Y\log X}} 
        & \text{if $E$ has CM,} \\
      c_E \cdot \li{X^2} + \BigO{\frac{X^2}{Y\log X} 
           + \frac{B_EX^2}{Y^3\log X}} 
        & \text{if $E$ is a non-CM curve.}
      \end{cases} 
\end{align}
Here and in the next two instances, by $F \le G + \BigO{H}$, we 
mean that either $F - G < 0$ or $0 \le F - G \ll H$. The theorem 
follows by combining \eqref{(6.3)} and \eqref{(6.2)} with 
\eqref{(6.1)}.

Let us prove our claim. By an inclusion-exclusion argument, the 
following inequality holds for any number $X \ge 2$ and any 
integer $Q$:
\begin{align*}
\sums{p \le X}{\text{$p \nmid N$, $d_p = j$}} p
  \le \sum_{h \mid Q} \mu(h) 
        \sums{p \le X}{\text{$p \nmid N$, $hj \mid d_p$}} p.
\end{align*}
Thus, applying partial summation and \eqref{(3.1)} to the last 
sum (as we did to obtain \eqref{(5.1)}), and noting that 
$\sum_{h \mid Q} |\mu(h)| = 2^{\omega(Q)}$, we obtain
\begin{align*}
\sums{p \le X}{\text{$p \nmid N$, $d_p = j$}} p 
  \le \li{X^2} \sum_{h \mid Q} \frac{\mu(h)}{\deg{\L{hj}}} 
     + \BigO{2^{\omega(Q)}X^2\exp\br{-B(\log X)^{5/14}}}, 
\end{align*}
provided $2c_1(hj)^{14}N^2 \le \log X$ for every $h \mid Q$.

We set $Q = Q(Y) \defeq \prod_{q \le Y} q$. Then $\log Q \sim Y$ 
as $Y \to \infty$ by the prime number theorem, and we may suppose 
that $X$ and $Y$ are large enough so that $QY \le \exp\br{3Y}$,
that is $2c_1(QY)^{14}N^2 \le 2c_1\exp\br{42Y}N^2 \le \log X$ (by
definition of $Y$ and our choice of constants). 

Thus, since $2^{\omega(Q)} = 2^Y \ll (\log X)^{1/42}$, and since
$\sum_{j \le Y} \frac{1}{j} \ll \log Y \ll \log\log\log X$, we 
have
\begin{align}\label{(6.4)}
\begin{split}
\mathcal{T}_1 
  & \defeq 
    \sums{p \le X}{\text{$p \nmid N$, $d_p \le Y$}} \frac{p}{d_p} 
    = \sum_{j \le Y} \frac{1}{j} 
      \sums{p \le X}{\text{$p \nmid N$, $d_p = j$ }} p \\
   & \hspace{120pt} 
    \le \li{X^2} \sum_{j \le Y} 
      \sum_{h \mid Q} \frac{\mu(h)}{j\deg{\L{hj}}}
      + \BigO{X^2\exp\br{-\textstyle\frac{1}{2}B(\log X)^{5/14}}}.
\end{split}
\end{align}
Letting $S = \{\text{$hj$ : $h \mid Q$ and $j \le Y$}\}$, we have 
\begin{align*}
 \sum_{j \le Y} \sum_{h \mid Q} \frac{\mu(h)}{j\deg{\L{hj}}}
 =
  \sum_{k \in S} \frac{1}{\deg{\L{k}}}
   \sum_{hj = k} \frac{\mu(h)}{j}
 =
 \sum_{k \in S} \frac{1}{k\deg{\L{k}}}
  \sum_{h \mid k} \mu(h)\cdot h.
\end{align*}
We complete the sum over $k$, noting that $k \not\in S$ implies 
either $k > Y$ or $k = hj$ with $\mu(h) = 0$, and that
$\sum_{h \mid k} \mu(h)\cdot h = (-1)^{\omega(k)}\phi(\rad(k))$,
obtaining
\begin{align*}
\sum_{j \le Y} \sum_{h \mid Q} \frac{\mu(h)}{j\deg{\L{hj}}}
 & = \sum_{k=1}^{\infty}  
      \frac{(-1)^{\omega(k)}\phi(\rad(k))}{k\deg{\L{k}}}                   
     + \BigO{\sum_{k > Y} \frac{1}{\deg{\L{k}}}} \\
 & = 
 c_E + 
\begin{cases} 
     \BigO{1/Y} 
       & \text{if $E$ has CM,} \\
     \BigO{B_E/Y^3} 
       & \text{if $E$ is a non-CM curve.}
     \end{cases}
\end{align*} 
by \eqref{(3.7)}. Combining this with \eqref{(6.4)} gives 
\eqref{(6.3)}. \qed

\section{Further remarks on the constant $c_E$}\label{Section 7} 

Notation in this section is as in the proof of Proposition
\ref{Proposition 3.2}. Let us first assume that $E$ is a non-CM
curve. Let 
$G = \varprojlim \mathrm{GL}_{2}(\Z/n\Z) 
      \cong \prod_{\ell}\mathrm{GL}_{2} (\Z_{\ell})$, 
the product being over all primes $\ell$, and let $H$ denote the 
image of $\Gal{\QQ}$ in $G$ under $\rho_E$. 
For $n \ge 1$ an integer, there is a natural 
projection map from $G$ to $\mathrm{GL}_{2}(\Z/n\Z)$; let
\begin{align*}
\Gamma_n \defeq \ker \br{G \to \mathrm{GL}_{2}(\Z/n\Z)}.
\end{align*}
By Serre's open image theorem, the index $(G:H)$ is finite, hence
there exist $n$ such that $\Gamma_n < H$; let $m$ denote the 
smallest such $n$. (In order to show that the growth of 
$\deg{\L{k}}$ is ``regular'', it is convenient to work with a large 
subgroup $\prod_\ell K_{\ell} \subset H$, with each $K_{\ell}$ 
having  simple structure.)

Now, the image of $\Gal{\QQ}$ in 
$\mathrm{GL}_{2}(\Z/k\Z)$ can be obtained by composing the maps
\begin{align*}
H \hookrightarrow G \twoheadrightarrow G/\Gamma_k \cong
\mathrm{GL}_{2}(\Z/k\Z),
\end{align*}
and hence
\begin{align*}
\deg{\L{k}} = [\L{k} : \Q] = | H / H \cap \Gamma_k|.
\end{align*}

Write $m = \prod_{p \mid m} p^{m_{p}}$ and let 
$k = \prod_{p \mid k} p^{k_{p}}$ be given.  

\noindent {\em Claim:}  If $k_{p} \ge m_{p}$ for some $p$ and 
$a \ge 1$, then
\begin{align*}
|H/H \cap \Gamma_{p^{a}\cdot k}|
= |H/H \cap \Gamma_{k}| \cdot 
|\Gamma_{p^{k_p}}  / \Gamma_{p^{a+k_p }} |.
\end{align*}
Moreover, if $k_{p}=0$, we have
$\Gamma_{p^{k_p}}  / \Gamma_{p^{a+k_p }} = \Gamma_1/\Gamma_{p^a}
\cong \mathrm{GL}_{2}(\Z/p^{a}\Z)$, and if $k_{p}>0$ then
\begin{align*}
|\Gamma_{p^{k_p}}  / \Gamma_{p^{a+k_p }} | = p^{4a}.
\end{align*}

\noindent {\em Proof of claim:}
We note that
\begin{align*}
|H/H \cap \Gamma_{p^{a}\cdot k}| =
|H/H \cap \Gamma_{ k}| \cdot
|(H \cap \Gamma_k) / (H \cap \Gamma_{p^{a}\cdot k})| 
\end{align*}
and
\begin{align}\label{(7.1)}
|(H \cap \Gamma_k) / (H \cap \Gamma_{p^{a}\cdot k})| 
=
\frac{ |(H \cap \Gamma_k)/(\Gamma_m \cap \Gamma_k) | 
\cdot
| (\Gamma_m \cap \Gamma_k)/(\Gamma_m \cap \Gamma_{p^{a}k}) | }
{ | (H \cap \Gamma_{p^{a}k})/(\Gamma_m \cap \Gamma_{p^{a}k}) | }.
\end{align}
Let $N = \Gamma_m \cap \Gamma_k$ and $S = H \cap \Gamma_{p^ak}$.
Since $\Gamma_m < H$ and (trivially) $\Gamma_{p^ak} < \Gamma_k$, 
we find that
\begin{align*}
S \cap N 
= (H \cap \Gamma_{p^{a}k}) \cap(\Gamma_m \cap \Gamma_k) 
= \Gamma_m \cap \Gamma_{p^ak}.
\end{align*}
Moreover, since $k_{p}\ge m_{p}$, given any 
$h_1 \in H \cap \Gamma_k$ we can find 
$h_2 \in H \cap \Gamma_{p^ak}$ such that 
$h_1 = h_2 \cdot (1,1,\ldots,1,\gamma_p,1,1,\ldots)$, where 
$\gamma_p = I + p^{k_{p}}M$ and $M \in \mathrm{Mat}_{2}(\Z_p)$, 
and consequently
\begin{align*}
S \cdot N 
= (H \cap \Gamma_{p^ak}) \cdot (\Gamma_m \cap \Gamma_k)
= H \cap \Gamma_k.
\end{align*}
Now, by the second isomorphism theorem of group theory,
$SN/N \cong S/S\cap N$, hence
\begin{align*}
(H \cap \Gamma_k)/(\Gamma_m \cap \Gamma_k)   
= SN/N 
\cong  
S/S\cap N 
= (H \cap \Gamma_{p^ak}) / (\Gamma_m \cap \Gamma_{p^ak}),
\end{align*}
which, together with \eqref{(7.1)}, implies that
\begin{align*}
|(H \cap \Gamma_k) / (H \cap \Gamma_{p^{a}\cdot k})| =
|(\Gamma_m \cap \Gamma_k)/(\Gamma_m \cap \Gamma_{p^{a}k})|.
\end{align*}

With $[a,b]$ denoting the least common multiple of two integers 
$a,b$, we have $\Gamma_a \cap \Gamma_b = \Gamma_{[a,b]}$. Further, 
if $a\mid b$ and $(ab,c)=1$, then 
$\Gamma_{ac}/\Gamma_{bc} \cong \Gamma_a/\Gamma_b$. Thus, again 
using that $k_{p} \ge m_{p}$, we find that 
\begin{align*}
(\Gamma_m \cap \Gamma_k)/(\Gamma_m \cap \Gamma_{p^{a}k})
=
\Gamma_{[m,k]}/\Gamma_{[m,p^{a}k]}
\cong
\Gamma_{p^{k_p}} / \Gamma_{p^{k_p+a}}.
\end{align*}
Hence
\begin{align*}
|H/H \cap \Gamma_{p^{a}\cdot k}|
=
|H/H \cap \Gamma_{ k}| \cdot
|(H \cap \Gamma_k) / (H \cap \Gamma_{p^{a}\cdot k})| 
=
|H/H \cap \Gamma_{ k}| \cdot
|\Gamma_{p^{k_p}} / \Gamma_{p^{k_p+a}}|,
\end{align*}
and the first part of the claim is proved. The latter part of the 
claim follows from
\begin{align*}
|\Gamma_{p^{k_p}}  / \Gamma_{p^{a+k_p }}|
= \prod_{j=k_{p}}^{a+k_{p}-1} 
|\Gamma_{p^{j}}  / \Gamma_{p^{j+1}}|
\end{align*}
together with 
$\Gamma_{p^{j}}  / \Gamma_{p^{j+1}} 
\cong \Gamma_{p}  /  \Gamma_{p^2}$, and 
$|\Gamma_p/\Gamma_{p^2}| = p^{4}$.

Now, let $\langle m \rangle$ be the set containing $1$ and the 
positive integers composed only of primes dividing $m$. For any 
$k$ we have $k = hj$ with $h \in \langle m \rangle$ and 
$(j,m) = 1$. We can further write $h = h_1h_2$ in a unique way with 
$h_1 \mid m$, $(h_1,h_2) = 1$, and $\nu_p > m_p$ for every 
$p \mid h_2$, where $p^{\nu_p} \| h_2$ and $p^{m_p} \| m$. 
From the above claim it follows that 
$\deg{\L{k}} = \deg{\L{h}}\cdot \deg{\L{j}}$, that 
$\deg{\L{j}} = |\mathrm{GL}_2(\Z/j\Z)|$, and that 
\begin{align*}
  \deg{\L{h}} = r_E(h_1) 
  \prod_{p \mid h_2} p^{4(\nu_{p} - m_{p})},
\end{align*}
where $r_E(h_1)$ is a rational number depending only on $E$ and 
$h_1$, $p^{\nu_{p}} \| h_2$, and $p^{m_{p}} \| m$.

We therefore have 
\begin{align*}
c_E & = 
 \sum_{h \in \langle m \rangle} 
    \frac{(-1)^{\omega(h)} \phi(\rad(h))}{h\deg{\L{h}}}
  \prod_{q \nmid m} 
   \br{1 - \sum_{k \ge 1} \frac{q-1}{q^k\deg{\L{q^k}}}} \\ 
 & = c\cdot 
  \prod_{q \mid m} \br{1 - \frac{q^3}{(q^2-1)(q^5-1)}}^{-1}
  \cdot \sum_{h \in \langle m \rangle} 
    \frac{(-1)^{\omega(h)} \phi(\rad(h))}{h\deg{\L{h}}},
\end{align*}
with $c$ as defined in \eqref{(1.2)}. 
Using the fact that 
$\deg{\L{h}} = r_E(h_1) 
  \prod_{p \mid h_2} p^{4(\nu_{p} - m_{p})}$, it is a 
straightforward matter to show that the sum over 
$h \in \langle m \rangle$ is equal to the rational number
\begin{align*}
 \prod_{q \mid m} 
        \br{1 - \frac{q-1}{q^{m_q}(q^5 - 1)}}
 \sum_{h_1 \mid m} 
 \frac{(-1)^{\omega(h_1)} \phi(\rad(h_1))}{h_1r_E(h_1)}
 \prod_{q \mid h_1}
        \br{1 - \frac{q-1}{q^{m_q}(q^5 - 1)}}^{-1}.
\end{align*}

The CM case is similar, except that $\deg{\L{q^{k}}}$ (for all but
finitely many $q$ and $k \ge 1$) equals 
$(q-1)^{2} \cdot q^{2(k-1)}$ if $q$ splits in $K$, and is equal to 
$(q^{2}-1) \cdot q^{2(k-1)}$ if $q$ is inert in $K$.  (Recall that 
$K$ denotes the quadratic imaginary field that contains the order 
by which $E$ has CM.)

\section{Further remarks on the error terms}\label{Section 8}

Wu \cite{W2012} has simplified the calculations involved in the 
proofs of Theorems \ref{Theorem 1.1} and \ref{Theorem 1.2}, 
and improved on the error terms given by us. 
In the unconditional CM case, Kim \cite{K2012} has made further
improvements  by using a different approach, namely
using class field theory and a Bombieri-Vinogradov type theorem for
number fields due to Huxley \cite{H1971}.

More precisely, in the case where $E$ has CM, Wu obtained an error
term of size \linebreak 
$\Oh_E((\log X)^{-1/14}))$, 
improving on our original error term 
$\Oh_E(\log\log\log X/\log\log X)$; 
in fact his method, on noting that $\deg{\L{k}}\le k^{2}$ holds in 
the CM case (cf.~\eqref{(3.1b)}), gives an error term of size 
$\Oh_E((\log X)^{-1/8})$.
In \cite{K2012} Kim greatly improved on the error term in the CM 
case, by showing that for any $A > 0$ we have, unconditionally, 
\begin{align*}
\sum_{p \le X} \expep 
   = c_E \cdot \li{X^2} \cdot  
      \Br{1 + \Oh_{E,A}\br{\frac{1}{(\log X)^{A}}}}.
\end{align*}
Regarding conditional results, Wu \cite{W2012} has shown that on 
GRH, 
\begin{align*}
\sum_{p \le X} \expep 
 = c_E \cdot \li{X^2} 
   + \Oh_{E}\br{X^{11/6}(\log X)^{1/3}},
\end{align*}
whether or not $E$ has CM.
It is worth noting that on GRH, Wu's treatment, together with
separating out supersingular primes,
gives an improved error term in the case where $E$ has CM, namely
$
\sum_{p \le X} \expep 
 = c_E \cdot \li{X^2} 
   + \Oh_{E}\br{X^{7/4}(\log X)^{1/2}}.
$

We will now outline Wu's argument by sketching a proof of this 
GRH-conditional CM result.
As in the proofs of Theorems \ref{Theorem 1.1} and 
\ref{Theorem 1.2}, the problem reduces, via partial summation and 
the Hasse bound, to showing that
\begin{align}
  \label{(8.1)}
\sums{p \le X}{p \nmid N} \frac{1}{d_p} 
= c_E\cdot \li{X} \cdot
     \Br{1 + \Oh_{E}\br{X^{-\frac{1}{4}}(\log X)^{\frac{3}{2}}} }.
\end{align}
Briefly, we set
$Y = Y(X) \defeq X^{\frac{1}{4}}(\log X)^{-\frac{1}{2}}$.
As in the proofs of Theorems \ref{Theorem 1.1} and 
\ref{Theorem 1.2}, we use the fact that 
$\frac{1}{d_p} = \sum_{hj \mid d_p} \frac{\mu(h)}{j}$ 
to obtain
\begin{align*}
\sums{p \le X}{p \nmid N} \frac{1}{d_p}
 = \Br{\sum_{k \le Y} + \sum_{Y < k \le 2\sqrt{X}}} 
      \sum_{hj = k} \frac{\mu(h)}{j} 
       \sums{p \le X}{\text{$p \nmid N$, $k \mid d_p$}} 1. 
\end{align*}
In considering the sum over $Y < k \le 2\sqrt{X}$, we note, as we 
did in the proof of Lemma \ref{Lemma 3.6}(b), that only the primes 
of ordinary reduction contribute.
Then, again as in the proof of Lemma \ref{Lemma 3.6}(b), we use the 
trivial estimate
$
 |S_j(X;D,k)| \ll \frac{X}{k^2} + \frac{\sqrt{X}}{k}.
$
Thus,
\begin{align*}
 \sum_{Y < k \le 2\sqrt{X}}\sum_{hj = k} \frac{\mu(h)}{j} 
    \sums{p \le X}{\text{$p \nmid N$, $k \mid d_p$}} 1 
  \ll
 \sum_{y < k \le 2\sqrt{X}}
  \sumss{p \le X}{\text{$p \nmid N$, $k \mid d_p$}}{a_p \ne 0}  1  
 \ll \frac{X}{Y} + \sqrt{X}\log X.
\end{align*}
We use Lemma \ref{Lemma 3.3}(b) to handle the sum over $k \le Y$, 
obtaining
\begin{align*}
 \sum_{k \le Y}\sum_{hj = k} \frac{\mu(h)}{j} 
       \sums{p \le X}{\text{$p \nmid N$, $k \mid d_p$}} 1
  & = \li{X}
       \sum_{k \le y} \frac{1}{\deg{\L{k}}} 
        \sum_{hj = k} \frac{\mu(h)}{j} 
    + \Oh\br{Y\sqrt{X}\log(XN)}.
\end{align*}
We complete the sum, applying Lemma \ref{Lemma 3.4} to bound the 
error that arises.
Combining everything, we obtain
$
\sums{p \le X}{p \nmid N} \frac{1}{d_p} 
  = c_E\cdot \li{X} 
     \br{1 
         + \Oh_E\br{\frac{Y(\log X)^2}{\sqrt{X}} 
               + \frac{\log X}{Y}}}.
$
Since 
$Y = X^{\frac{1}{4}}(\log X)^{-\frac{1}{2}}$, this gives
\eqref{(8.1)}. 

\section*{Acknowledgements} 

For providing corrections and useful comments, we are grateful to 
the anonymous referee, Valentin Blomer, Adam Felix, Nathan Jones, 
Sungjin Kim, and Jie Wu. 

\bibliographystyle{article}

\end{document}